\title{Coarse fundamental groups and box spaces}
\author{Thiebout Delabie\thanks{Universit\'{e} de Neuch\^{a}tel, \url{thiebout.delabie@unine.ch}, supported by Swiss NSF grant no. 200021 163417} 
\ and Ana Khukhro\thanks{Universit\'{e} de Neuch\^{a}tel, \url{anastasia.khukhro@unine.ch}}}
\date{\today}
\newtheorem{theorem}{Theorem}[section]
\newtheorem{lemma}[theorem]{Lemma}
\newtheorem{proposition}[theorem]{Proposition}
\newtheorem{definition}[theorem]{Definition}
\newtheorem*{acknow}{Acknowledgements}
\newcommand{\Cay}{\operatorname{Cay}}
\newcommand{\N}{\mathbb{N}}
\newcommand{\Z}{\mathbb{Z}}
\newcommand{\close}[1]{$#1$-close}
\newcommand{\homotopic}[1]{$#1$-homotopic}
\newcommand{\SL}[2]{\operatorname{SL}(#1,#2)}
\newcommand{\rk}[1]{\operatorname{rk}(#1)}
\newcommand{\RG}[2]{\operatorname{RG}(#1,#2)}
\begin{document}

\maketitle

\begin{abstract}
We use a coarse version of the fundamental group first introduced by Barcelo, Kramer, Laubenbacher and Weaver to show that box spaces of finitely presented groups detect the normal subgroups used to construct the box space, up to isomorphism. As a consequence we have that two finitely presented groups admit coarsely equivalent box spaces if and only if they are commensurable via normal subgroups. We also provide an example of two filtrations $(N_i)$ and $(M_i)$ of a free group $F$ such that $M_i>N_i$ for all $i$ with $[M_i:N_i]$ uniformly bounded, but with $\Box_{(N_i)}F$ not coarsely equivalent to $\Box_{(M_i)}F$. Finally, we give some applications of the main theorem for rank gradient and the first $\ell^2$ Betti number, and show that the main theorem can be used to construct infinitely many coarse equivalence classes of box spaces with various properties. 
\end{abstract}

\section{Introduction}
Given a free group $G=F_S$ on a set $S$, and the Cayley graph of a quotient $G/N$ of this free group with respect to the image of $S$, one can recover the subgroup $N$ by computing the fundamental group of this Cayley graph, which will be isomorphic to the free group $N$. If the group $G$ is not free, one cannot use the fundamental group in this way.

In this paper, we use a coarse version of the fundamental group, first defined for simplicial complexes in \cite{BKLW} (and then in full generality in \cite{BCW}, see also \cite{BBLL}, \cite{BL}), to recover normal subgroups used to construct quotients in the context of box spaces of finitely presented groups.

\begin{definition}
Given a residually finite, finitely generated group $G$, a filtration of $G$ is a sequence of nested, normal, finite index subgroups $(N_i)$ of $G$ such that the intersection $\cap_i N_i$ is trivial. Let $S$ be a fixed generating set of $G$. The box space $\Box_{(N_i)}G$ of $G$ with respect to a filtration $(N_i)$ is the disjoint union $\sqcup_i G/N_i$ metrized using the Cayley graph metric induced from $S$ on each component, such that the distance between any two distinct components is equal to the sum of their diameters. 
\end{definition}

In this way, box spaces are coarse geometric objects which encode information about a residually finite group. There are many fascinating connections between the geometric properties of a box space and the group used to construct it (see for example \cite{CWW}, \cite{Mar}, \cite{NY}, \cite{Roe}, \cite{WY}). For this reason, box spaces are often the first examples or counterexamples of spaces with interesting properties in coarse geometry.

The natural notion of equivalence between metric spaces in the world of coarse geometry is defined as follows.

\begin{definition}
Given two metric spaces $(X,d_X)$ and $(Y,d_Y)$, a map $\varphi:X \rightarrow Y$ is a coarse embedding of $X$ into $Y$ if there exist proper functions $\rho_{\pm}: \mathbb{R}_+ \rightarrow \mathbb{R}_+$ such that for all $a,b \in X$,
$$\rho_-(d_X(a,b))\leq d_Y(\varphi(a),\varphi(b)) \leq \rho_+(d_X(a,b)).$$
The map $\varphi$ is a coarse equivalence if, in addition, there exists $C>0$ such that for all $y\in Y$, the image of $X$ is at most distance $C$ from $y$. If there exists such a $\varphi$, we say that $X$ and $Y$ are coarsely equivalent, and write $X \simeq_{CE} Y$. 
\end{definition}

For example, choosing different generating sets in the construction of box spaces gives rise to coarsely equivalent spaces. It is interesting to investigate the rigidity of box spaces, i.e. to which extent the coarse equivalence class of a box space determines the group it was constructed from. Two results in this direction are as follows. We write $X\simeq_{\text{QI}} Y$ to mean that two spaces $X$ and $Y$ are quasi-isometric (that is, they are coarsely equivalent, and there exists a constant $C>0$ such that $\rho_-(t)= t/C -C$ and $\rho_+(t)= Ct +C$).

\begin{theorem}[\cite{AA}]
Let $G$ and $H$ be finitely generated groups with respective filtrations $N_i$ and $M_i$. If $\Box_{(N_i)}G\simeq_{\text{CE}} \Box_{(M_i)}H$, then $G\simeq_{\text{QI}} H$. 
\end{theorem}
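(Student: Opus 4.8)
The plan is to reduce the statement to the standard fact that any coarse equivalence between finitely generated groups is automatically a quasi-isometry (because Cayley graphs are geodesic spaces, on which coarse control functions can be taken affine), so that it suffices to produce a coarse equivalence $G \to H$ from the given coarse equivalence $\varphi \colon \Box_{(N_i)}G \to \Box_{(M_i)}H$. The geometric input that makes this possible is that a box space looks locally like the group it is built from: since the filtration $(N_i)$ is nested and intersects trivially, for every $R>0$ there is an index $i_R$ so that for all $i \ge i_R$ and every $g \in G$ the ball $B_{G/N_i}(gN_i,R)$ is isometric to the ball $B_G(g,R)$ in the Cayley graph of $G$, via the (locally injective) quotient map. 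The same holds for $H$ and $(M_i)$.

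First I would match up the components. Because the diameters of the components $G/N_i$ and $H/M_i$ tend to infinity while the distance between two distinct components is the sum of their diameters, the upper control function $\rho_+$ forces the image $\varphi(G/N_i)$ to lie, for all sufficiently large $i$, within a bounded neighbourhood of a single component $H/M_{f(i)}$ of the target. Coarse density (the constant $C$) together with the lower control $\rho_-$ then forces $f$ to be, up to finitely many indices, a bijection pairing components of comparable diameter. Composing $\varphi$ with the nearest-point projection onto $H/M_{f(i)}$ yields maps $\varphi_i \colon G/N_i \to H/M_{f(i)}$ that are coarse equivalences with control functions $\rho_\pm$ and density constant independent of $i$.

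Next I would extract a single map between the groups by a limiting argument. Fix a non-principal ultrafilter $\omega$ on $\N$ (or argue by a diagonal procedure). For $g \in G$, let $\tilde g_i \in G/N_i$ denote its image; for $i$ large relative to $|g|$ this lies in a ball isometric to a ball of $G$, and $\varphi_i(\tilde g_i)$ lies in a ball of $H/M_{f(i)}$ isometric to a ball of $H$. After re-centering by $\varphi_i(e)$, I would set $\psi(g) = \lim_\omega \varphi_i(\tilde g_i) \in H$, reading each $\varphi_i(\tilde g_i)$ as an element of $H$ through the local identification of balls. The uniform bounds $\rho_\pm$ carry over in the limit to show $\psi$ is a coarse embedding, and the uniform density constant $C$ shows it is coarsely dense; hence $\psi \colon G \to H$ is a coarse equivalence, and by the standard upgrade $G \simeq_{\text{QI}} H$.

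The main obstacle is this limiting step: one must check that $\psi$ is well defined and respects the metric uniformly, i.e. that the local identifications of balls in $G/N_i$ (resp.\ $H/M_{f(i)}$) with balls in $G$ (resp.\ $H$) are compatible as $R$ and $i$ grow, and that $\varphi_i(e)$ can serve consistently as a moving basepoint without the control functions degrading. The ultrafilter cleanly absorbs the non-canonical choices (of lifts, of nearest-point projections, and of the index $i$), but verifying that the resulting $\psi$ inherits the two-sided control $\rho_\pm$ on all of $G$, rather than merely on each fixed ball, is the technical heart of the argument. By contrast, the final passage from coarse equivalence to quasi-isometry is routine.
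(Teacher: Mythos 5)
The paper does not prove this statement itself---it is quoted as Theorem~7 of \cite{AA}---but your argument is essentially the one given there: first match components (Lemma~1 of \cite{AA}) to obtain coarse equivalences $G/N_i \to H/M_{f(i)}$ with uniform control, then pass to a limit using the fact that, since the filtrations are nested with trivial intersection, balls of any fixed radius in $G/N_i$ (resp.\ $H/M_{f(i)}$) are eventually isometric to balls in $G$ (resp.\ $H$), and finally upgrade the resulting coarse equivalence of groups to a quasi-isometry. Your proposal is correct and follows the same route, including correctly identifying the uniformity of the control functions in the limiting step as the point requiring care.
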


\begin{theorem}[\cite{Das}]
Given two finitely generated groups $G$ and $H$ with respective filtrations $N_i$ and $M_i$, $\Box_{(N_i)}G\simeq_{\text{CE}} \Box_{(M_i)}H$ implies that $G$ and $H$ are uniformly measure equivalent.
\end{theorem}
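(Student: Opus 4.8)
The plan is to manufacture a measured coupling of $G$ and $H$ out of the coarse equivalence by passing to an ultralimit of the finite quotients, and then to verify that the uniform quasi-isometry constants force this coupling to satisfy the boundedness condition defining uniform measure equivalence. First I would promote the coarse equivalence $\varphi\colon \Box_{(N_i)}G \to \Box_{(M_i)}H$ to a map that respects the component structure. Because both spaces have bounded geometry, are uniformly discrete, and have inter-component distance equal to the sum of diameters, $\varphi$ must send each component $G/N_i$ into a bounded neighbourhood of a single component $H/M_{j(i)}$; comparing cardinalities via $\rho_{\pm}$ shows that for large $i$ the assignment $i \mapsto j(i)$ is a bijection of indices and that $|G/N_i|$ and $|H/M_{j(i)}|$ are comparable. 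After reindexing and a standard Hall-type perturbation (valid for bounded-geometry uniformly discrete spaces), I obtain \emph{bijections} $\phi_i\colon G/N_i \to H/M_i$ that are quasi-isometries of the Cayley graphs with constants independent of $i$.

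Next I would build the coupling. Fix a non-principal ultrafilter $\omega$ on $\N$ and let $(\Omega,\mu)=\prod_\omega (G/N_i,\mathrm{unif})$ be the ultraproduct probability (Loeb) space, with $G$ acting diagonally by left translation, $g\cdot[x_i]=[gx_i]$. This action is measure-preserving and \emph{free}: since $N_i$ is normal, $g$ fixes a point of $G/N_i$ if and only if $g\in N_i$, and as $\bigcap_i N_i=\{1\}$ any $g\neq 1$ lies outside $N_i$ for all large $i$, hence $g[x_i]\neq[x_i]$ for $\omega$-almost every coordinate. The same construction gives a free measure-preserving action of $H$ on $\Omega'=\prod_\omega(H/M_i,\mathrm{unif})$, and the bijections $\phi_i$ assemble into a measure-space isomorphism $\Phi\colon\Omega\to\Omega'$, through which I transport the $H$-action back to $\Omega$.

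Now $G$ and $H$ act freely and measure-preservingly on the same space $\Omega$, and since each action is transitive on every finite component, the two orbit equivalence relations agree up to a \emph{uniformly bounded} error in the Cayley metric: for $g\in G$, the point $g\cdot x$ and the transported $H$-moves stay within distance $R(|g|)$, with $R$ independent of $i$ by the uniform quasi-isometry constants. I would use this bounded coarse orbit equivalence to define a measurable cocycle $\alpha\colon G\times\Omega\to H$ taking values in a ball of radius $R(|g|)$, and then apply the standard Furman construction to $\Sigma=\Omega\times H$ to produce commuting measure-preserving actions of $G$ and $H$ with finite-measure fundamental domains. The essential uniform bound on $\alpha$ and on its inverse cocycle is exactly the $L^\infty$ integrability condition in the definition of uniform measure equivalence, giving that $G$ and $H$ are uniformly measure equivalent.

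The main obstacle I anticipate is this last step: converting the merely \emph{coarse} agreement of the two orbit relations into an honest measurable cocycle with the required uniform bound. One must organise the bounded defect of the $\phi_i$ measurably and uniformly across coordinates, handle the measurability and possible non-separability of the Loeb space by passing to a separable model on which both the $G$- and $H$-actions remain well defined, and check that the resulting coupling genuinely has finite-measure fundamental domains. Establishing the bijective, uniformly quasi-isometric matching of components in the first step is likewise delicate, and it is precisely here that one exploits the fact that each $G/N_i$ is a single transitive quotient.
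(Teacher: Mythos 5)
This theorem is not proved in the paper: it is quoted verbatim from \cite{Das} as background, so there is no in-paper proof to compare against. Measured against the actual argument in \cite{Das} (which does proceed by assembling a measured coupling out of the componentwise uniform quasi-isometries supplied by Lemma 1 of \cite{AA}), your overall strategy --- ultraproducts of the finite quotients with uniform measures, freeness from $\bigcap_i N_i=\{1\}$ and normality, and boundedness of the cocycles from the uniform quasi-isometry constants --- is the right one. The first step and the freeness computation are fine.

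There is, however, a genuine gap at the point where you upgrade the componentwise quasi-isometries to \emph{bijections} $\phi_i\colon G/N_i\to H/M_{f(i)}$ and hence to a measure-space isomorphism $\Phi\colon\Omega\to\Omega'$. A map between finite sets at bounded distance from a bijection must preserve cardinality, and $[G:N_i]$ and $[H:M_{f(i)}]$ are only comparable up to a multiplicative constant (a ball volume), not equal; no Hall-type perturbation can fix this. Worse, the bijective set-up is not merely unavailable but structurally wrong: uniform measure equivalence permits the two fundamental domains to have different finite measures, with coupling index the ultralimit of $[H:M_{f(i)}]/[G:N_i]$, and insisting on a measure-space isomorphism of the two ultraproduct probability spaces would force this index to be $1$. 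The construction has to be asymmetric from the start, e.g.\ by taking counting measure on a uniformly bounded thickening of the graph of $\phi_i$ inside $G/N_i\times H/M_{f(i)}$, letting $G$ act on the first factor and $H$ on the second (so the actions commute by construction rather than ``up to bounded error''), and passing to the ultralimit. Relatedly, your claim that transitivity on each finite component makes the two orbit relations agree does not survive the ultraproduct: orbits in the Loeb space are countable while the space is not, so ``components'' are not orbits there, and the passage from coarse agreement of orbits to a measurable uniformly bounded cocycle --- which you correctly identify as the main obstacle --- is precisely the content of the theorem and is left uncompleted.
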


An application of these results is being able distinguish box spaces up to coarse equivalence, which one can for example use as in \cite{AA} to show that there exist uncountably many expanders with geometric property (T) of Willett and Yu (\cite{WY}). We note that these results were recently generalised to sofic approximations of groups (\cite{AFS}). 

In this paper we will prove the following strong rigidity theorem for box spaces of finitely presented groups.
\begin{theorem}\label{main}
Let $G$ be a finitely presented group and $H$ a finitely generated group, with respective filtrations $N_i$ and $M_i$ such that $\Box_{(N_i)}G\simeq_{\text{CE}} \Box_{(M_i)}H$. Then there exists an almost permutation with bounded displacement $f$ of $\N$ such that $N_i$ is isomorphic to $M_{f(i)}$ for every $i$ in the domain of $f$.
\end{theorem}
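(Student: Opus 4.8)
The plan is to exploit the intuition from the introduction: for a finitely presented group, the Cayley graph of a finite quotient $G/N_i$ is ``simply connected at large scale'' after filling in the relators, so the coarse fundamental group of the component should recover the normal subgroup $N_i$ up to isomorphism. The strategy has two largely independent halves. First, a combinatorial/coarse-topological half: show that a suitable coarse fundamental group of the component $G/N_i$, computed at a scale large enough to see the defining relators of $G$ but small compared to the injectivity radius (girth-type quantity) of the quotient, is isomorphic to $N_i$. Second, a rigidity half: show that a coarse equivalence $\varphi \colon \Box_{(N_i)}G \to \Box_{(M_i)}H$ must, for large $i$, carry components to components in a way that is close to an isometry at the relevant scale, and hence induces isomorphisms on these coarse fundamental groups, thereby matching $N_i$ with some $M_{f(i)}$.

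For the first half I would set up the coarse fundamental group $\pi_1^r$ at scale $r$ (in the sense of \cite{BKLW}, \cite{BCW}) of the Cayley graph $\Cay(G/N_i, S)$, where loops are closed edge-paths and two loops are \homotopic{r} if they differ by elementary moves within balls of radius $r$. The key point is that when $i$ is large, $N_i$ has no short nontrivial elements, so the ball of radius comparable to $r$ around any vertex of $G/N_i$ is isometric to the corresponding ball in $\Cay(G,S)$. Choosing $r$ just larger than the maximal length of the finitely many defining relators of $G$, I expect a van Kampen / filling argument to show that $\pi_1^r(\Cay(G/N_i,S)) \cong N_i$: the relators of $G$ are exactly the loops that become contractible at scale $r$, so what survives is the kernel $N_i$ of the presentation map restricted to the quotient. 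I would make this precise by comparing with the universal cover, i.e. with $\Cay(G,S)$ together with $2$-cells glued along relators, whose fundamental group is trivial, so that the coarse $\pi_1$ of the finite quotient is the deck group $N_i$.

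For the second half I would use the fact that a coarse equivalence has well-behaved control functions $\rho_\pm$ that are uniform across all components. Because the components of a box space are separated by distances equal to the sum of their diameters, for large enough $i$ the $\varphi$-image of a single component $G/N_i$ must lie within bounded distance of a single component $H/M_{f(i)}$, and this assignment $i \mapsto f(i)$ is the desired almost permutation; its bounded displacement and the ``almost'' (finitely many exceptions) come from the fact that only finitely many small components can misbehave, together with the quasi-surjectivity constant $C$. The crucial step is to upgrade this coarse matching of components into an isomorphism of coarse fundamental groups: a coarse equivalence restricted to a component is a quasi-isometry with controlled constants, and a quasi-isometry between graphs that is close to a bijection on balls of radius $r$ should induce an isomorphism on $\pi_1^r$ at a comparable scale, since it sends small loops to small loops and respects the elementary homotopy moves up to a change of scale. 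Combining, $N_i \cong \pi_1^r(G/N_i) \cong \pi_1^{r'}(H/M_{f(i)}) \cong M_{f(i)}$.

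The hard part will be the interface between the two halves, namely proving that the coarse fundamental group is a genuine coarse invariant with good enough functoriality under quasi-isometries that are only defined on finite pieces: one must control how the homotopy scale $r$ transforms under $\varphi$, and ensure that the isomorphism $\pi_1^r(G/N_i)\cong N_i$ is stable as $i$ varies so that a single scale (or a controlled family of scales) works simultaneously for cofinitely many components. In particular I anticipate technical care is needed to show that $\varphi$ does not ``tear'' short loops—equivalently, that the girth of the quotients grows without bound along the filtration (which follows from $\cap_i N_i = \{1\}$), so that for each fixed scale all but finitely many components are locally isometric to $\Cay(G,S)$ and the finite presentation of $G$ lets the filling argument go through uniformly. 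Establishing this uniform comparison, rather than the isomorphism for a single fixed $i$, is where the finite presentability of $G$ is essential and where I expect the main difficulty to lie.
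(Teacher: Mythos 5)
Your overall architecture is the same as the paper's: a detection lemma identifying the coarse fundamental group of $\Cay(G/N_i)$ with $N_i$ at a scale sandwiched between the relator length and the girth of the quotient (this is \cref{detect}, proved essentially as you suggest, by uniquely lifting $r$-paths to $\Cay(G)$ and ``jumping over'' relators of length at most $2r$), combined with the fact that a coarse equivalence of box spaces matches components via an almost permutation with bounded displacement and uniform quasi-isometry constant $C$ --- which the paper does not reprove but imports directly from Lemma 1 of \cite{AA}. The uniformity issues you flag at the end (girth tending to infinity because $\cap_i N_i=\{e\}$, a single scale working for cofinitely many components, and the fact that $H$ is also finitely presented, which follows from quasi-isometry invariance and is needed to run the detection lemma on the $H$ side) are all handled in the paper essentially as you describe.

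The genuine gap is your final step, where you assert that the quasi-isometry between components ``induces an isomorphism on $\pi_1^r$ at a comparable scale'', giving $N_i\cong \pi_1^r(G/N_i)\cong\pi_1^{r'}(H/M_{f(i)})\cong M_{f(i)}$. You cannot get an isomorphism directly: pushing an $r$-loop and an $r$-homotopy through a $C$-quasi-isometry degrades the scale to $Cr+C$, so what you actually obtain (the paper's \cref{invar}) is only a homomorphism $A_{1,r}(\Cay(G/N_i))\to A_{1,Cr+C}(\Cay(H/M_{f(i)}))$; the quasi-inverse yields surjectivity of this map, but not injectivity --- a loop that is essential at scale $r$ upstairs could a priori become null-homotopic at the coarser scale $Cr+C$ downstairs, and nothing in the van Kampen picture rules this out. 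The paper closes this gap with an algebraic input rather than a topological one: running the surjection in both directions gives $N_i \twoheadrightarrow M_{f(i)} \twoheadrightarrow N_i$, and since $N_i$ is finitely generated and residually finite, hence Hopfian, the composite surjective endomorphism of $N_i$ is an isomorphism, forcing both arrows to be isomorphisms. Without this Hopfian argument (or some substitute proof of injectivity of the induced map), the middle link of your chain of isomorphisms is unjustified, even though the final conclusion is correct.
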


Here, as in \cite{AA}, an almost permutation $f$ of $\N$ is a bijection between two cofinite subsets of $\N$. We say that $f$ has bounded displacement if there exist $A$ and $N$ such that $|f(n)-n|\leq A$ for all $n\geq N$.

We achieve this rigidity result by using the coarse fundamental group of \cite{BKLW}.
The idea of using the coarse fundamental group in this context comes from the following intuitive idea: if $G= \left\langle S|R \right \rangle$ is a finitely presented group, and $N$ is a normal subgroup of $G$ such that non-trivial elements in $N$ are ``sufficiently long'' with respect to the generating set of $G$, then one can use the coarse fundamental group at a scale which lies between the length of the longest element in $R$ and the shortest element in $N$, so that only loops coming from $N$ are detected in $\Cay(G/N)$.

This idea works well for eventually detecting the normal subgroups used to construct box spaces, since the condition that a filtration $(N_i)$ must be nested and have trivial intersection implies that the length of non-trivial elements in the $N_i$ tends to infinity as $i$ tends to infinity.

The main theorem can be applied in various contexts. In particular, we deduce the following rigidity result, and give a surprising example of box spaces which are algebraically ``close'', but not coarsely equivalent.

\begin{theorem}\label{ComInt}
Let $G$ be finitely presented group and $H$ a finitely generated group. Then there exist two filtrations $N_i$ and $M_i$ of $G$ and $H$ respectively such that $\Box_{(N_i)} G$ and $\Box_{(M_i)} H$ are coarsely equivalent if and only if $G$ is commensurable to $H$ via a normal subgroup.
\end{theorem}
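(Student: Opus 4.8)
The plan is to prove the two implications separately, invoking \cref{main} for one direction and giving a direct construction for the other.

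For the implication that coarse equivalence of some pair of box spaces forces commensurability, I would simply apply \cref{main}. Suppose filtrations $N_i$ of $G$ and $M_i$ of $H$ are given with $\Box_{(N_i)}G \simeq_{\text{CE}} \Box_{(M_i)}H$. Then \cref{main} supplies an almost permutation $f$ of $\N$ with bounded displacement such that $N_i \cong M_{f(i)}$ for every $i$ in the (cofinite, hence nonempty) domain of $f$. Fixing one such $i$ and setting $K = N_i$, we obtain a group that is a finite index normal subgroup of $G$ and, through the isomorphism $N_i \cong M_{f(i)}$, isomorphic to the finite index normal subgroup $M_{f(i)}$ of $H$. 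This is exactly what it means for $G$ and $H$ to be commensurable via a normal subgroup, so this direction is immediate.

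For the converse, suppose $G$ and $H$ are commensurable via a normal subgroup: there is a group $K$ that embeds as a finite index normal subgroup of both $G$ and $H$, which I identify with its two images. Since $G$ is residually finite and $K$ has finite index in $G$, the group $K$ is finitely generated and residually finite, so it admits a filtration. The key point is to choose this filtration $(N_i)$ to consist of \emph{characteristic} finite index subgroups of $K$, which is possible because one may replace any finite index subgroup by the (still finite index) intersection of all subgroups of the same index. As conjugation by any element of $G$ or of $H$ restricts to an automorphism of $K$, each characteristic $N_i$ is normal in both $G$ and $H$; together with $\bigcap_i N_i = 1$ and $[G:N_i],[H:N_i]<\infty$, this makes $(N_i)$ a simultaneous filtration of $G$ and of $H$. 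It then remains to show $\Box_{(N_i)}G \simeq_{\text{CE}} \Box_{(N_i)}H$, which I would deduce from the single lemma that $\Box_{(N_i)}G \simeq_{\text{CE}} \Box_{(N_i)}K$ (and symmetrically for $H$), followed by transitivity.

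To prove this lemma I compare metrics component by component through the inclusions $K/N_i \hookrightarrow G/N_i$. Coarse density is uniform: writing $G$ as a finite union of right cosets $Kt$ and expressing $g = kt$, the point $kN_i \in K/N_i$ satisfies $d_{G/N_i}(gN_i, kN_i) \le \max_t |t^{-1}|_{S_G}$, a bound independent of $i$. The crucial observation, which makes the distortion uniform in $i$, is that because $N_i \subseteq K$, any $g \in G$ with $g N_i = k^{-1}k' N_i$ for $k,k' \in K$ automatically lies in $K$. Hence a geodesic representative in $\Cay(G/N_i)$ of the displacement between two elements of $K/N_i$ lifts to an element of $K$, and the undistortedness of the finite index subgroup $K \le G$ converts $S_G$-length to $S_K$-length with constants independent of $i$. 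Assembling these component-wise quasi-isometries, whose constants do not depend on $i$, yields the desired coarse equivalence of box spaces.

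I expect the main obstacle to be precisely this last lemma, and specifically the verification that the quasi-isometry constants between $\Cay(K/N_i)$ and $\Cay(G/N_i)$ may be taken uniform in $i$. The observation that $N_i \subseteq K$ forces lifts of geodesics to remain inside $K$ is what resolves this, keeping the distortion controlled by the fixed distortion of $K$ in $G$ rather than by anything depending on the depth of the filtration.
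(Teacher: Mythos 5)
Your proposal is correct and follows essentially the same route as the paper: the forward direction is an immediate application of \cref{main}, and the converse passes to a common finite index normal subgroup $K$ equipped with a characteristic filtration (the paper's \cref{ComNorm}), which is then simultaneously a filtration of $G$ and of $H$. The only difference is that you spell out the final step --- the uniform-in-$i$ quasi-isometry between $\Cay(G/N_i)$ and $\Cay(K/N_i)$, hinging on the observation that $N_i\subseteq K$ forces geodesic representatives of cosets of elements of $K/N_i$ to lie in $K$ --- which the paper asserts without proof; your justification is sound, and it correctly isolates exactly the point that fails in the setting of \cref{CorInt}, where the filtration is not contained in the subgroup.
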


\begin{theorem}\label{bounded}
There exist two box spaces $\Box_{(N_i)}G\not\simeq_{\text{CE}}\Box_{(M_i)}G$ of the same group $G$ such that $G/N_i\twoheadrightarrow G/M_i$ with $[M_i:N_i]$ bounded.
\end{theorem}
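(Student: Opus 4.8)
The plan is to take $G$ to be a non-abelian free group $F=F_k$ with $k\ge 2$ and to play the two filtrations off against each other through \cref{main}. The key simplification for free groups is Nielsen--Schreier: a finite-index subgroup of $F$ is itself free, of rank determined by its index, so two finite-index subgroups of $F$ are isomorphic if and only if they have the same index in $F$. Consequently the conclusion of \cref{main} becomes purely numerical: if $\Box_{(N_i)}F\simeq_{\text{CE}}\Box_{(M_i)}F$, then there is an almost permutation $f$ of $\N$, defined on a cofinite set, with $[F:N_i]=[F:M_{f(i)}]$ for all $i$ in its domain. To prove \cref{bounded} it therefore suffices to build filtrations $(N_i)$ and $(M_i)$ with $N_i\subseteq M_i$ and $[M_i:N_i]$ bounded, but whose index sequences $n_i=[F:N_i]$ and $m_j=[F:M_j]$ cannot be matched in this way.

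The construction I have in mind fixes a surjection $\psi\colon F\to\Z/2$ with kernel $K$ of index $2$, and takes a filtration $(M_i)$ of $F$ whose indices grow quickly, say $m_{i+1}>2m_i$, and which satisfies $M_i\not\subseteq K$ for every $i$. I then set $N_i:=M_i\cap K$. Since $M_i\not\subseteq K$ we get $M_iK=F$, hence $[M_i:N_i]=[F:K]=2$, and $(N_i)$ is again a filtration: it is normal as an intersection of normal subgroups, nested because $(M_i)$ is, and has trivial intersection since $\cap_i N_i\subseteq\cap_i M_i=1$. Thus $N_i\subseteq M_i$, $G/N_i\twoheadrightarrow G/M_i$, and $[M_i:N_i]=2$ is bounded. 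The point is that $n_i=[F:N_i]=2m_i$ now lies strictly between $m_i$ and $m_{i+1}$, so $n_i$ is never equal to any $m_j$; by the previous paragraph $N_i$ is then not isomorphic to any $M_j$, for every large $i$. This rules out the numerical matching coming from \cref{main} (in fact without even invoking bounded displacement, since a single large index in the domain of $f$ already yields a contradiction), so the two box spaces cannot be coarsely equivalent.

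The step I expect to be the main obstacle is guaranteeing the existence of a filtration $(M_i)$ with $M_i\not\subseteq K$ for all $i$: one must produce a normal index-$2$ refinement of $M_i$ inside $F$ while keeping both chains nested, and an abelian (or, more generally, a $2$-group) quotient $F/M_i$ would force $M_i\subseteq K$. I would resolve this by choosing the filtration so that each quotient $F/M_i$ is perfect; a non-abelian free group admits such a filtration, for instance using iterated permutational wreath products of $A_5$, which are perfect and $2$-generated and yield a residual chain with fast-growing indices, after passing to a subsequence to arrange $m_{i+1}>2m_i$. Since a perfect group has no nontrivial homomorphism to $\Z/2$, the nonzero map $\psi$ cannot factor through $F/M_i$, which is exactly the assertion that $M_i\not\subseteq K$. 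With this in hand the remaining verifications are routine, and taking $G=F$ completes the proof.
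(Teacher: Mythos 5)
Your argument is correct and shares the paper's overall strategy --- invoke \cref{main}, use the Nielsen--Schreier formula to convert ``$N_i\cong M_j$'' into ``$[F:N_i]=[F:M_j]$'', and then arrange two filtrations whose index sequences are disjoint while the relative indices $[M_i:N_i]$ stay bounded --- but your realisation of that strategy is genuinely different. The paper takes $G=F_2$ embedded in $\SL{2}{\Z}$ and lets $N_i$, $M_i$ be the kernels of reduction modulo $2^{2i}$ and $2^{2i+1}$; the indices $2^{6i-4}$ and $2^{6j-1}$ can never coincide since $6i-4=6j-1$ has no integer solutions, and the relative index is constantly $2^3$. You instead build the pair abstractly, intersecting a fast-growing filtration $(M_i)$ (with $m_{i+1}>2m_i$) with a fixed index-$2$ normal subgroup $K$, so that $n_i=2m_i$ is interleaved strictly between consecutive $m_j$'s. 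What your route buys is flexibility (it works in any $F_k$ and isolates exactly the numerical condition needed); what it costs is the existence question you correctly flag: you must produce a filtration with $M_i\not\subseteq K$ for all $i$, and your sufficient condition of perfect quotients rests on the nontrivial (though true) fact that $F_k$ admits a filtration by normal subgroups with perfect finite quotients. If you go that way, a cleaner justification than iterated wreath products of $A_5$ (where $2$-generation and faithfulness of the inverse limit action both need proof) is to let $M_i$ be the intersection of the kernels of all surjections of $F_k$ onto non-abelian finite simple groups of order at most $i$: these are characteristic, of finite index, nested, have trivial intersection since free groups are residually (finite simple non-abelian), and the quotients are subdirect products of non-abelian simple groups, hence perfect. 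The paper's congruence-subgroup example sidesteps this existence issue entirely. As a minor aside, your inclusion $N_i\le M_i$ matches the direction of the surjection $G/N_i\twoheadrightarrow G/M_i$ in the statement literally, whereas in the paper's example the roles of $N_i$ and $M_i$ are in fact interchanged relative to the wording of the theorem (there $M_i\subseteq N_i$ with $[N_i:M_i]=8$); this is only a relabelling.
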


We deduce the following for rank gradient and the first $\ell^2$ Betti number.

\begin{theorem}\label{RGInt}
Given two finitely presented, residually finite groups $G$ and $H$ and respective filtrations $(N_i)$ and $(M_i)$, if $\Box_{(N_i)}G$ is coarsely equivalent to $\Box_{(M_i)}H$ then $\RG{G}{(N_i)}>0$ if and only if $\RG{H}{(M_i)}>0$. 
\end{theorem}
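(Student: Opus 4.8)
The plan is to feed the coarse equivalence into \cref{main} and then convert the resulting algebraic information into a two-sided comparison of the two rank gradients. Recall that for a chain the rank gradient is the limit $\RG{G}{(N_i)} = \lim_i \frac{\rk{N_i}-1}{[G:N_i]}$, which exists because the sequence is non-increasing by the Nielsen--Schreier inequality. By \cref{main}, the hypothesis supplies an almost permutation $f$ of $\N$ with bounded displacement, defined on a cofinite set $D\subseteq\N$, with $N_i\cong M_{f(i)}$ for every $i\in D$. Since the minimal number of generators is an isomorphism invariant, this already gives $\rk{N_i}=\rk{M_{f(i)}}$ for all $i\in D$. We may assume $G$ and $H$ are infinite, the finite case being trivial, so that every $N_i$ and $M_{f(i)}$ is infinite and hence $\rk{\cdot}\geq 1$.

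The one remaining ingredient is that the coarse equivalence forces the indices $[G:N_i]$ and $[H:M_{f(i)}]$ to be comparable up to a uniform multiplicative constant. First I would record that a coarse equivalence $\varphi$ matches the component $G/N_i$ with $H/M_{f(i)}$ for $i$ large, since the distance between distinct components tends to infinity; this is precisely the correspondence encoded by $f$. Writing $\rho_-,\rho_+$ and $C$ for the data of $\varphi$, properness of $\rho_-$ means that $\varphi(a)=\varphi(b)$ implies $d(a,b)\leq D_0$ for a fixed $D_0$, so every fibre of $\varphi$ lies in a ball of radius $D_0$ and thus has cardinality at most $K_1:=|B_G(D_0)|$; summing over fibres yields $[G:N_i]\leq K_1[H:M_{f(i)}]$. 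Conversely, $C$-density of the image of $\varphi|_{G/N_i}$ in $H/M_{f(i)}$ gives $[H:M_{f(i)}]\leq K_2[G:N_i]$ with $K_2:=|B_H(C)|$. Setting $K=\max(K_1,K_2)$ we obtain, for all large $i\in D$,
$$\frac{1}{K}[H:M_{f(i)}]\leq [G:N_i]\leq K[H:M_{f(i)}].$$

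Combining the two ingredients, for all large $i\in D$ the equality $\rk{N_i}-1=\rk{M_{f(i)}}-1\geq 0$ together with the index estimate sandwiches
$$\frac{1}{K}\cdot\frac{\rk{M_{f(i)}}-1}{[H:M_{f(i)}]}\leq \frac{\rk{N_i}-1}{[G:N_i]}\leq K\cdot\frac{\rk{M_{f(i)}}-1}{[H:M_{f(i)}]}.$$
Finally I would pass to the limit along $i\in D$. Since $(N_i)$ is a chain, the middle sequence converges to $\RG{G}{(N_i)}$; since the defining sequence for $H$ converges and $f(i)\to\infty$ by bounded displacement, the subsequence indexed by $f(i)$ converges to $\RG{H}{(M_i)}$. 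Hence
$$\tfrac{1}{K}\,\RG{H}{(M_i)}\leq \RG{G}{(N_i)}\leq K\,\RG{H}{(M_i)},$$
from which positivity of one rank gradient is equivalent to positivity of the other.

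The step I expect to be the main obstacle is the index comparison: making precise that the coarse equivalence genuinely pairs up the components $G/N_i\leftrightarrow H/M_{f(i)}$ and that the control functions yield the uniform, index-independent bounds $K_1,K_2$. This rests on the injectivity radii of the $G/N_i$ growing, which holds because $\cap_i N_i=1$, so that balls of fixed radius $D_0$ in the quotients have uniformly bounded size. Everything else is bookkeeping with the almost permutation and the convergence of the rank-gradient sequences.
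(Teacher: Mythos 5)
Your proposal is correct and follows essentially the same route as the paper: apply \cref{main} to get $N_i\cong M_{f(i)}$ (hence equal ranks), use the non-increasing nature of the rank-gradient sequence coming from submultiplicativity of $\operatorname{rk}-1$, and compare the indices $[G:N_i]$ and $[H:M_{f(i)}]$ up to a uniform multiplicative constant coming from the coarse equivalence of the matched components (the paper extracts this constant as $|B_G(C^2)|$ from the $C$-quasi-isometry of components provided by Lemma 1 of \cite{AA}, which is the same component-matching fact you sketch). The only cosmetic difference is that you prove a two-sided sandwich directly while the paper proves one inequality and invokes symmetry.
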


\begin{theorem}\label{l2Int}
Given two finitely presented, residually finite groups $G$ and $H$ and respective filtrations $(N_i)$ and $(M_i)$, if $\Box_{(N_i)}G$ is coarsely equivalent to $\Box_{(M_i)}H$ then $\beta_1^{(2)}(G)>0$ if and only if $\beta_1^{(2)}(H)>0$.
\end{theorem}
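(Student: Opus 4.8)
The plan is to reduce Theorem~\ref{l2Int} to a statement about rank gradient, so that it becomes essentially a corollary of Theorem~\ref{RGInt}. The key algebraic input is the well-known inequality relating the first $\ell^2$ Betti number of a finitely presented residually finite group to its rank gradient: for any filtration $(N_i)$ of such a group $G$, one has
\begin{equation*}
\beta_1^{(2)}(G) \leq \RG{G}{(N_i)}.
\end{equation*}
This is a result of L\"uck (via the approximation theorem for $\ell^2$ Betti numbers) combined with the observation that $d(N_i)-1 \geq (\dim_{\N} \text{first } \ell^2 \text{ homology of } N_i)$, which for a filtration yields $\beta_1^{(2)}(G) \leq \RG{G}{(N_i)}$. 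Crucially, the first $\ell^2$ Betti number does not depend on the choice of filtration, whereas rank gradient a priori does.

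First I would recall that $\beta_1^{(2)}(G)>0$ implies $\RG{G}{(N_i)}>0$ for \emph{every} filtration $(N_i)$, by the inequality above. This gives one direction almost for free once Theorem~\ref{RGInt} is invoked. Next, suppose $\beta_1^{(2)}(G)>0$ and $\Box_{(N_i)}G \simeq_{\text{CE}} \Box_{(M_i)}H$. Then $\RG{G}{(N_i)}>0$, so by Theorem~\ref{RGInt} we obtain $\RG{H}{(M_i)}>0$. The remaining task is to pass from positivity of rank gradient along this particular filtration $(M_i)$ back to positivity of $\beta_1^{(2)}(H)$, which is not automatic since the reverse inequality to the one above can fail.

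The main obstacle is precisely this last step: $\RG{H}{(M_i)}>0$ does not in general force $\beta_1^{(2)}(H)>0$. To circumvent it, I would appeal to Theorem~\ref{main}, which supplies an almost permutation $f$ of $\N$ with bounded displacement such that $N_i \cong M_{f(i)}$ for all $i$ in the domain of $f$. Since $\beta_1^{(2)}$ of a group can be computed as a limit of normalized first Betti numbers of a filtration (L\"uck approximation), and $d(N_i) = d(M_{f(i)})$ because isomorphic groups have equal minimal numbers of generators, the normalized ranks $\frac{d(N_i)-1}{[G:N_i]}$ and $\frac{d(M_{f(i)})-1}{[H:M_{f(i)}]}$ agree up to the bounded-displacement control on indices. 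Thus the two rank gradients coincide, and by symmetry the argument runs in both directions, yielding the equivalence. I expect the technical heart of the write-up to be verifying that the bounded displacement of $f$ together with the uniform comparability of the index sequences (inherited from the coarse equivalence, as in Theorem~\ref{main}) is enough to guarantee that the limiting ratios genuinely match, rather than merely that positivity is preserved.
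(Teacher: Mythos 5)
You cannot close your reduction to rank gradient, and you in fact name the fatal obstacle yourself: positivity of $\RG{H}{(M_i)}$ does not yield $\beta_1^{(2)}(H)>0$, since L\"uck's inequality $\beta_1^{(2)}(H)\leq \RG{H}{(M_i)}$ only goes one way and no converse is available. Your proposed circumvention does not repair this. Showing that $\RG{G}{(N_i)}$ and $\RG{H}{(M_i)}$ agree up to multiplicative constants (via $d(N_i)=d(M_{f(i)})$ and comparability of indices) is merely a sharpening of \cref{RGInt}; it still leaves you stranded at ``$\RG{H}{(M_i)}>0$'' with no route down to $\beta_1^{(2)}(H)>0$. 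Moreover, the sentence invoking L\"uck approximation alongside $d(N_i)=d(M_{f(i)})$ conflates two different invariants: L\"uck approximation computes $\beta_1^{(2)}$ from the first Betti numbers $b_1(N_i)$ (ranks of abelianizations), whereas your displayed ratios use the minimal number of generators $d(N_i)$, and the conclusion you actually draw (``the two rank gradients coincide'') concerns only the latter.

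The fix --- and it is what the paper does --- is to drop rank gradient entirely and run your comparison argument directly on the quantities that compute $\beta_1^{(2)}$. By L\"uck approximation, $\beta_1^{(2)}(G)=\lim_i b_1(N_i)/[G:N_i]$ and $\beta_1^{(2)}(H)=\lim_i b_1(M_i)/[H:M_i]$. Since $b_1$ is an isomorphism invariant, \cref{main} gives $b_1(N_i)=b_1(M_{f(i)})$, and the $C$-quasi-isometry of $G/N_i$ with $H/M_{f(i)}$ supplied by Lemma 1 of \cite{AA} gives $[G:N_i]\leq |B_G(C^2)|\cdot[H:M_{f(i)}]$; because the limits exist, restricting to the subsequence $f(i)$ changes nothing, and one obtains $\beta_1^{(2)}(G)\geq \beta_1^{(2)}(H)/|B_G(C^2)|$, whence the equivalence by symmetry. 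Everything you set up --- the almost permutation, the bounded displacement, the index comparison --- is exactly what is needed; only the invariant being transported along the isomorphisms $N_i\cong M_{f(i)}$ must be $b_1$ rather than $d$.
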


We also use our result to coarsely distinguish various box spaces of interest, as follows.

\begin{theorem}\label{HilbInt}
For each $n\geq 2$, there exist infinitely many coarse equivalence classes of box spaces of the free group $F_n$ that coarsely embed into a Hilbert space.
\end{theorem}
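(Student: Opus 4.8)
The plan is to combine \cref{main} with the known existence of a single Hilbert-space-embeddable box space of $F_n$, and then to manufacture infinitely many pairwise non-equivalent examples by passing to sub-filtrations. The underlying algebraic input is the Nielsen--Schreier formula: every finite-index subgroup $N\leq F_n$ is free of rank $[F_n:N](n-1)+1$, so for $n\geq 2$ the isomorphism type of $N$ is determined by, and determines, its index $[F_n:N]$. This is exactly the invariant that \cref{main} makes visible at the level of coarse geometry.

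First I would fix a starting point. Since $F_n$ has the Haagerup property (it acts properly on its Cayley tree, hence on a Hilbert space), it is known that there is a filtration $(N_i)$ of $F_n$ whose box space $\Box_{(N_i)}F_n$ coarsely embeds into a Hilbert space. Because a filtration is nested with trivial intersection, the indices $[F_n:N_i]$ tend to infinity, so after passing to a subsequence I may assume they are \emph{strictly} increasing; passing to a subsequence merely restricts the embedding to a subspace of the box space and hence preserves coarse embeddability into a Hilbert space. Next I would partition $\N$ into infinitely many infinite subsets $\N=\bigsqcup_m B_m$. For each $m$ the subfamily $(N_i)_{i\in B_m}$ is again a filtration (nestedness is inherited, and the intersection over any infinite subset still equals $\bigcap_i N_i=\{1\}$), and its box space is isometric to the subspace $\bigsqcup_{i\in B_m}F_n/N_i$ of $\Box_{(N_i)}F_n$, since the box-space distance between two distinct components equals the sum of their diameters and so depends only on those two components. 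As coarse embeddability into a Hilbert space passes to subspaces, each $\Box_{(N_i)_{i\in B_m}}F_n$ coarsely embeds into a Hilbert space.

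It remains to see that these box spaces fall into infinitely many coarse equivalence classes, and this is where \cref{main} does the work. Suppose $\Box_{(N_i)_{i\in B_m}}F_n\simeq_{\text{CE}}\Box_{(N_i)_{i\in B_{m'}}}F_n$, and write $\beta_m,\beta_{m'}$ for the increasing enumerations of $B_m,B_{m'}$. Then \cref{main} supplies an almost permutation $f$ of $\N$ such that $N_{\beta_m(j)}\cong N_{\beta_{m'}(f(j))}$ for cofinitely many $j$. By the strict monotonicity of the indices (hence of the ranks) this forces $\beta_m(j)=\beta_{m'}(f(j))$, and in particular $\beta_m(j)\in B_m\cap B_{m'}$, for cofinitely many $j$. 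But $\beta_m$ is injective, so its values tend to infinity and only finitely many of them can lie in the finite set $B_m\cap B_{m'}$ when $m\neq m'$ --- a contradiction. Hence the box spaces $\Box_{(N_i)_{i\in B_m}}F_n$ are pairwise non-coarsely-equivalent, yielding infinitely many coarse equivalence classes.

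I expect the main obstacle to be the very first step, namely producing the initial coarsely-Hilbert-embeddable box space of $F_n$ and citing it cleanly; everything after that is a clean combinatorial deduction from \cref{main} and Nielsen--Schreier. The only points in the deduction that require care are the verification that each sub-box-space is a genuine (isometric) subspace of the original, so that embeddability is inherited for free, and the observation that it suffices to use the almost-permutation part of \cref{main}: once the index map is injective, disjointness of the $B_m$ already defeats any bijective matching of isomorphism types, so the bounded-displacement refinement is not even needed here.
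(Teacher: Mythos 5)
Your argument is correct, but it takes a genuinely different route from the paper. The paper constructs infinitely many \emph{different} filtrations explicitly --- the $m$-homology filtrations $N_{i+1}=N_i^m[N_i,N_i]$ for pairwise coprime $m$ --- computes their ranks via Nielsen--Schreier, observes that coprimality forces $N_i\not\cong M_j$ for large $i,j$, and invokes \cref{main}; embeddability of each of these box spaces is exactly the main result of \cite{Khu}. You instead start from a \emph{single} embeddable box space, pass to a subsequence with strictly increasing indices, and split it along a partition $\N=\bigsqcup_m B_m$ into infinitely many sub-box spaces, which are isometric subspaces (so embeddability is inherited for free) and which \cref{main} plus Nielsen--Schreier shows are pairwise non-equivalent since the $B_m$ are disjoint. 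This is a clean and more modular argument: it shows that \emph{any} box space of $F_n$ spawns infinitely many coarse equivalence classes of sub-box spaces, and it needs no rank computations beyond injectivity of index $\mapsto$ rank. What it does not remove is the dependence on \cite{Khu} for the base case, and here your heuristic is wrong even though the fact you need is true: the Haagerup property of $F_n$ does \emph{not} imply that some box space of $F_n$ coarsely embeds into a Hilbert space --- free groups admit expander box spaces, and the Haagerup property only characterises \emph{fibred} coarse embeddability of box spaces (\cite{CWW}). The correct citation is the main theorem of \cite{Khu} (the $p$-homology box spaces of $F_n$ embed into Hilbert space), which is precisely the input the paper uses; with that reference in place of the Haagerup remark, your proof is complete.
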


\begin{theorem}\label{InfRam}
There exist infinitely many coarse equivalence classes of box spaces of the free group $F_3$ that contain Ramanujan expanders.
\end{theorem}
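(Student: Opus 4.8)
The plan is to deduce the result from \Cref{main} by combining a single Ramanujan box space of $F_3$ with an abundance of pairwise inequivalent subfiltrations. The key point is that for a free group the isomorphism type of a finite-index normal subgroup is a very coarse invariant: by the Nielsen--Schreier formula a finite-index normal subgroup $N\leq F_3$ is free of rank $\rk{N}=2[F_3:N]+1$, so $N$ is determined up to isomorphism by its index $[F_3:N]$. Consequently, if $(N_i)$ and $(M_i)$ are filtrations of $F_3$ with $\Box_{(N_i)}F_3\simeq_{\text{CE}}\Box_{(M_i)}F_3$, then \Cref{main} supplies an almost permutation $f$ with $N_i\cong M_{f(i)}$, whence $[F_3:N_i]=[F_3:M_{f(i)}]$ for all but finitely many $i$. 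Since the indices in a filtration form a strictly increasing sequence, this forces the two index sets $\{[F_3:N_i]\}$ and $\{[F_3:M_i]\}$ to coincide up to a finite symmetric difference; notice that the bounded-displacement hypothesis is not even needed for this comparison.

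First I would produce one box space of $F_3$ containing Ramanujan expanders. For this I would take the degree-$6$ Ramanujan graphs arising from the construction of Lubotzky, Phillips and Sarnak (the case $p=5$), realised along a congruence tower over a fixed prime $\ell$, so that the quotients $\mathrm{PSL}(2,\Z/\ell^n)$ form a directed system. Lifting the $6$-element symmetric generating set to the three free generators yields kernels in $F_3$ forming a nested sequence $L_1\supsetneq L_2\supsetneq\cdots$ with $\bigcap_i L_i=\{e\}$. This is a filtration of $F_3$ each of whose quotients $F_3/L_i$ is a $6$-regular Ramanujan graph, so $\Box_{(L_i)}F_3$ contains Ramanujan expanders.

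To finish, I would manufacture infinitely many coarse equivalence classes from this single example. Partition $\N$ into infinitely many infinite subsets $\N=\bigsqcup_{n\in\N}A_n$, and for each $n$ let $(L_i)_{i\in A_n}$ be the corresponding subsequence. Each such subsequence is again a filtration of $F_3$: it is nested, normal and of finite index, and has trivial intersection since $A_n$ is cofinal in $\N$. Every component of $\Box_{(L_i)_{i\in A_n}}F_3$ is one of the Ramanujan graphs $F_3/L_i$, so each of these box spaces contains Ramanujan expanders. Finally, for $n\neq m$ the index sets $\{[F_3:L_i]:i\in A_n\}$ and $\{[F_3:L_i]:i\in A_m\}$ are disjoint and infinite, hence differ in infinitely many elements; by the first paragraph the box spaces $\Box_{(L_i)_{i\in A_n}}F_3$ and $\Box_{(L_i)_{i\in A_m}}F_3$ are therefore not coarsely equivalent. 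This produces infinitely many coarse equivalence classes, all containing Ramanujan expanders.

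The main obstacle I expect is the first step: arranging the Lubotzky--Phillips--Sarnak graphs into a \emph{nested} tower of quotients of $F_3$ while retaining the Ramanujan property. The graphs at distinct prime levels are mutually incomparable as quotients (there is no surjection between $\mathrm{PSL}(2,\Z/q')$ and $\mathrm{PSL}(2,\Z/q)$ for distinct primes), so they cannot lie on a single chain; passing to a congruence tower over one prime circumvents this, but at the cost of needing the Ramanujan bound for the whole tower rather than only at prime level. By contrast, once such a base filtration is in hand, the passage to infinitely many classes is immediate from \Cref{main} together with the Nielsen--Schreier computation of ranks.
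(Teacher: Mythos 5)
Your argument is correct, but it obtains the infinitude of classes by a genuinely different mechanism than the paper. The paper sets up exactly the arithmetic situation you describe --- a prime $q$ with $-1$ a quadratic residue modulo $q$ and $5$ a quadratic residue modulo $2q$, and the congruence tower of quotients $\operatorname{PSL}_2(q^i)$ of $F_3$, which is a Ramanujan sequence by Theorem 7.4.3 of Lubotzky's book --- but then produces infinitely many classes by varying the prime: Dirichlet's theorem gives infinitely many admissible $q$ (those congruent to $1$ modulo $20$), and the Nielsen--Schreier computation $\rk{N_i}=(q^2-1)q^{3i-2}+1$ shows towers attached to distinct primes share no isomorphic terms, so \cref{main} separates the resulting box spaces. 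You instead fix one admissible prime and take infinitely many pairwise disjoint infinite subsequences of a single tower, separating the box spaces via the disjointness of the index sets together with \cref{main} and Nielsen--Schreier; this is valid and arguably more economical, needing only one suitable prime and no further number theory. The obstacle you flag at the end --- whether the Ramanujan bound persists along the entire congruence tower over a fixed prime rather than only at prime level --- is precisely what the paper's citation of Lubotzky, Theorem 7.4.3 supplies (the Ramanujan property for all congruence quotients of the relevant arithmetic lattice), so your base filtration exists as claimed; the only detail to add is the quadratic-residue condition on the modulus, which is what guarantees that the three lifted generators generate a free group surjecting onto $\operatorname{PSL}_2$ of each congruence quotient.
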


\begin{theorem}\label{coprimeInt}
Given $n \geq 3$, there exists a box space of the free group $F_n$ such that no box space of $F_m$ with $m-1$ coprime to $n-1$ is coarsely equivalent to it.
\end{theorem}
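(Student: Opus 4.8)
The plan is to argue by contradiction, combining Theorem~\ref{main} with the Nielsen--Schreier index formula. Suppose the box space we build, $\Box_{(N_i)}F_n$, were coarsely equivalent to some $\Box_{(M_j)}F_m$ with $\gcd(m-1,n-1)=1$. By Theorem~\ref{main} there is an almost permutation $f$ of $\N$ with bounded displacement such that $N_i$ is isomorphic to $M_{f(i)}$ for every $i$ in its (cofinite) domain. Finite-index subgroups of free groups are themselves free, so this abstract isomorphism is detected entirely by rank. Writing $a_i=[F_n:N_i]$ and $b_j=[F_m:M_j]$, the Nielsen--Schreier formula gives
$$a_i(n-1)+1=\rk{N_i}=\rk{M_{f(i)}}=b_{f(i)}(m-1)+1,$$
so $a_i(n-1)=b_{f(i)}(m-1)$ for all $i$ in the domain of $f$. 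Since $\gcd(m-1,n-1)=1$, Euclid's lemma forces $(m-1)\mid a_i$ for all such $i$.

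The construction is chosen precisely to make this divisibility impossible. As $n\geq 3$ we have $n-1\geq 2$, so we may fix a prime $p$ dividing $n-1$. Free groups are residually-$p$, so $F_n$ admits a filtration $(N_i)$ in which every index $a_i=[F_n:N_i]$ is a power of $p$; for instance one may take the lower $p$-central series, whose terms are normal of $p$-power index, nested, and have trivial intersection. Now let $m\geq 3$ with $\gcd(m-1,n-1)=1$. Since $p\mid n-1$ we have $p\nmid m-1$, and as $m-1>1$ it has a prime factor $q\neq p$; then $q\nmid p^{e}=a_i$, so $(m-1)\nmid a_i$, contradicting the conclusion of the first paragraph. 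This rules out every $F_m$ with $m\geq 3$ and $m-1$ coprime to $n-1$.

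The step I expect to be the main obstacle is the boundary case $m=2$, in which $m-1=1$ is trivially coprime to $n-1$ and the divisibility condition $(m-1)\mid a_i$ becomes vacuous. Here the rank equation degenerates to $b_{f(i)}=(n-1)a_i$, which is always solvable and is realised by a genuine filtration of $F_2$, so Theorem~\ref{main} on its own cannot exclude $F_2$. The difficulty is genuine rather than cosmetic: by Theorem~\ref{ComInt}, $F_n$ and $F_2$ are commensurable via a normal subgroup and hence \emph{do} admit some pair of coarsely equivalent box spaces, so the task is to rule out our particular filtration rather than to produce a universal obstruction. Moreover the remaining invariants at our disposal also fail to separate the two groups: $F_n$ and $F_2$ are quasi-isometric and uniformly measure equivalent (so the results of \cite{AA} and \cite{Das} give nothing), and both have positive rank gradient and positive first $\ell^2$ Betti number (so Theorems~\ref{RGInt} and~\ref{l2Int} give nothing).

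To close this gap I would either read the statement as concerning $m\geq 3$, or refine the construction so as to additionally violate a scale-sensitive coarse invariant that the matched $F_2$-filtration with indices $(n-1)a_i$ cannot share. The cleanest route, and the one I would pursue first, is to track not merely the isomorphism type of the recovered subgroup but the full behaviour of the coarse fundamental group across a range of scales between the defining relators and the shortest nontrivial element of $N_i$, exploiting that this quantitative profile for $\Cay(F_n/N_i)$ is tied to the presentation length of $F_n$ rather than of $F_2$; establishing that this finer profile is a coarse invariant is exactly where the real work for $m=2$ would lie.
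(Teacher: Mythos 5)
Your argument is correct and is essentially the paper's own: the paper takes the $(n-1)$-homology filtration $N_{i+1}=N_i^{n-1}[N_i,N_i]$, so that every index $[F_n:N_i]$ is a power of $n-1$, and then derives exactly your divisibility contradiction from \cref{main} and Nielsen--Schreier; your residually-$p$ filtration for a prime $p\mid n-1$ is an interchangeable variant of the same idea. The $m=2$ issue you flag is equally present in the paper's proof, which dismisses the final equation as ``impossible due to the assumptions on $m$'' and does none of the extra work you sketch, so the statement is indeed to be read as ruling out $m\ge 3$ and no repair is needed on your side.
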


We also prove the following, which corrects Proposition 4 of \cite{Kh12}.

\begin{theorem}\label{CorInt}
There exists a finitely generated, residually finite group $G$ with filtration $(M_i)$ and a normal finite index subgroup $H\lhd G$ such that $\Box_{(M_i)}G$ and $\Box_{(M_i\cap H)}H$ are not coarsely equivalent.
\end{theorem}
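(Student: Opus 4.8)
The plan is to exhibit an explicit counterexample built from a free group and to deduce the failure of coarse equivalence from Theorem~\ref{main} together with the Nielsen--Schreier rank formula. I would take $G=F_2$, which is finitely presented and residually finite, fix a surjection $q\colon G\to\Z/2\Z$, and set $H=\ker q$, a normal subgroup of index $2$; then $H$ is finitely generated, and $(M_i\cap H)$ is a filtration of $H$ whenever $(M_i)$ is a filtration of $G$. The essential requirement on $(M_i)$ is a transversality condition: I want $M_i\not\subseteq H$ for every $i$, equivalently $HM_i=G$, so that the second isomorphism theorem gives $[H:M_i\cap H]=[HM_i:M_i]=[G:M_i]=:n_i$, and hence $[G:M_i\cap H]=2n_i$.

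To produce such a filtration I would choose pairwise non-isomorphic nonabelian finite simple groups $S_1,S_2,\dots$ and build a tower of surjections $\Psi_i\colon F_2\twoheadrightarrow S_1\times\cdots\times S_i$ compatible with the projections, putting $M_i=\ker\Psi_i$. Such a tower exists: each $S_j$ is $2$-generated, and since the $S_j$ are pairwise non-isomorphic a Goursat argument shows that a lift generating the new factor automatically surjects onto the whole product, so $\Psi_{i+1}$ can be chosen refining $\Psi_i$; choosing the simple quotients to separate a fixed enumeration of $F_2\setminus\{1\}$ makes $\cap_i M_i$ trivial. Crucially, each quotient $G/M_i=S_1\times\cdots\times S_i$ is perfect, so the nontrivial homomorphism $q$ to the abelian group $\Z/2\Z$ cannot factor through it; this is precisely the assertion $M_i\not\subseteq H$. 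I expect this transversality step to be the main obstacle, since it requires simultaneously arranging trivial intersection of $(M_i)$ and that no $M_i$ is swallowed by $H$.

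With the filtration in hand I argue by contradiction. Suppose $\Box_{(M_i)}G\simeq_{\text{CE}}\Box_{(M_i\cap H)}H$. Since $G$ is finitely presented and $H$ is finitely generated, Theorem~\ref{main} yields an almost permutation $f$ of $\N$ with $M_i\cong M_{f(i)}\cap H$ for all $i$ in its cofinite domain. By Nielsen--Schreier a subgroup of index $k$ in $F_2$ is free of rank $k+1$, so $M_i$ has rank $n_i+1$, while by transversality $M_{f(i)}\cap H$ has index $2n_{f(i)}$ in $G$ and hence rank $2n_{f(i)}+1$. An isomorphism of free groups preserves rank, which forces $n_i=2n_{f(i)}$ for all large $i$.

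Finally I would rule this equation out using the growth of the indices. Writing $n_i=\prod_{j\le i}|S_j|$, the sequence $(n_i)$ is strictly increasing with ratios $n_i/n_{i-1}=|S_i|\ge 60>2$. If $f(i)\ge i$ then $2n_{f(i)}\ge 2n_i>n_i$, whereas if $f(i)\le i-1$ then $2n_{f(i)}\le 2n_{i-1}<n_i$; either way $n_i=2n_{f(i)}$ is impossible as soon as $|S_i|>2$. This contradiction shows that $\Box_{(M_i)}G$ and $\Box_{(M_i\cap H)}H$ are not coarsely equivalent, which is exactly the claimed correction of Proposition~4 of \cite{Kh12}.
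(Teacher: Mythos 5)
Your proof is correct, but it follows a genuinely different route from the paper's. The paper also reduces the statement to \cref{main} plus Nielsen--Schreier, but its example lives in $F_3$: it takes the filtration $(N_i)$ of \cite{DK} with $[F_3:N_i]=q^{3i-2}(q^2-1)/2$ for $q=29$, takes $H$ to be the homology cover subgroup $\Gamma(N_3)=N_3^q[N_3,N_3]$, identifies $\Gamma(N_3)/(N_i\cap\Gamma(N_3))$ with $N_4/N_i$ via a structural result from \cite{DK}, and derives a contradiction from a divisibility-by-$3$ condition on the exponents of $q$ appearing in the two rank formulas. You instead work in $F_2$ with an index-$2$ normal subgroup $H=\ker(F_2\to\Z/2\Z)$ and build the filtration $(M_i)$ from scratch as kernels of compatible surjections onto products of pairwise non-isomorphic nonabelian finite simple groups; perfectness of the quotients gives the transversality $HM_i=G$, so $[G:M_i\cap H]=2[G:M_i]$, and the growth $n_i/n_{i-1}\ge 60>2$ kills the equation $n_i=2n_{f(i)}$ forced by \cref{main}. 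Your approach buys self-containedness (no reliance on the lattice of subgroups constructed in \cite{DK}) and a cleaner numerical contradiction (a monotonicity argument rather than a congruence), at the cost of having to justify the existence of the tower $\Psi_i\colon F_2\twoheadrightarrow S_1\times\cdots\times S_i$ with trivial intersection of kernels; the Goursat step is fine since a simple quotient of $S_1\times\cdots\times S_i$ must be isomorphic to one of the factors, and the separation of points can be arranged using the classical fact that $F_2$ is residually $\operatorname{PSL}(2,p)$ (or residually alternating), which supplies infinitely many pairwise non-isomorphic simple quotients detecting any given non-trivial element. The paper's choice of example is dictated by its broader context (the same $\Gamma(N)$ construction is used elsewhere in the paper), whereas yours is optimized for this single statement; both are complete proofs of the theorem.
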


\begin{acknow}
We thank Alain Valette for his valuable comments, and for providing a nice proof of the existence of infinitely many primes with the required properties in \cref{RamThm}. The first author was supported by Swiss NSF grant no. 200021 163417.
\end{acknow}

\section{Preliminaries}

Throughout this paper, we will identify paths in a Cayley graph $\Cay(G,S)$ with words in the elements of $S$. In this section, we explain in detail how this works. We also recall basic properties of classical fundamental groups, and give the definition of the coarse fundamental group from \cite{BCW}. 

Given a connected graph $X$, the natural metric to consider on the vertices of $X$ is the shortest path metric, that is, the distance between two vertices $x$ and $y$ is the number of edges in a shortest path of edges (i.e. a sequence of adjacent edges) linking $x$ to $y$. Such a path that realises the distance between $x$ and $y$ is called a geodesic between $x$ and $y$.

In this paper, we consider Cayley graphs, which are graphs arising from groups. 
Given a finitely generated group $G= \left\langle S \right\rangle$, $\Cay(G,S)$ denotes the Cayley graph of $G$ with respect to $S$, which has $G$ as its vertex set, and $\{(g,gs): g\in G, s\in S\}$ as its edge set. An edge $(g, gs)$ is said to be labelled by the generator $s\in S$. Note that given a path from the trivial element $e\in G$ to $g\in G$, the sequence of labels of edges in this path gives a word in the generating set $S$ which is equal to the element $g$ in $G$. In this way, words read along loops in the Cayley graph are relators in $G$ (i.e. they are equal to the trivial element $e$ in $G$). A word is said to be reduced if it does not contain occurrences of $ss^{-1}$ or $s^{-1}s$ for $s\in S$, or $e$ if $e\in S$. 
We will omit the generating set in the notation $\Cay(G)$ when there is no ambiguity.

Given a free group $F_S$ on a finite set $S$, consider the Cayley graph $(\Cay(F_S/N), \bar{S})$ of a quotient of $F_S$ by a normal subgroup $N$, with respect to the image of the generating set $S$. One can pick a basepoint (say, the identity element) in this graph, and consider homotopy classes of loops based at this basepoint. These homotopy classes form a group (with the composition of elements being concatenation of loops) called the fundamental group, denoted $\pi_1(\Cay(F_S/N))$. 

In this situation, this group is isomorphic to the subgroup $N$ of $F_S$. Note that it is a free group (the fundamental group of a graph is always free), generated by the labels of the loops representing the homotopy classes of loops that generate $\pi_1(\Cay(F_S/N))$. In fact, since we can view the free group $F_S$ as the fundamental group of a wedge $W_S$ of $|S|$ circles labelled by elements of $S$, we can recover the graph $\Cay(F_S/N)$ by taking the covering space of this wedge of circles corresponding to the quotient $\pi_1(W_S)\cong F_S \rightarrow F_S/N$. The fundamental group of this covering space will be isomorphic to $N$ (see Section 1.A of \cite{Hat} for more on this Galois-type correspondence between covering spaces of graphs and subgroups of their fundamental groups).

If one now considers $\Cay(G/N)$ for a general finitely generated group $G$, it is no longer possible to detect the normal subgroup $N$ using $\pi_1(\Cay(G/N))$. This is because the loops in a Cayley graph represent relations (i.e. words in the generating set which represent the trivial element $e\in G$) and so relators of $G$ will also contribute to $\pi_1(\Cay(G/N))$. What one obtains as $\pi_1(\Cay(G/N))$ in this case is the kernel of the surjective homomorphism from the free group on the generating set of $G$ to the group $G/N$ (so it is again a free group). 

In order to try and detect the normal subgroup $N$ from the graph $\Cay(G/N)$, we will use a coarse version of the fundamental group first defined in \cite{BKLW} (see also \cite{BCW}). Informally, this is a fundamental group at a scale which will ignore small loops but detect big ones.

We will employ this notion in the context of finitely presented groups, i.e. groups $G$ which can be written as a quotient of a finitely generated free group $F_S$ by a subgroup which is the normal closure in $F_S$ of a finite set of elements, $R$. We write $G= \left\langle S|R \right \rangle$, and refer to $\left\langle S|R \right \rangle$ as the (finite) presentation of $G$.

We will now define a coarse version of the fundamental group for simple connected graphs. However, we will immediately restrict ourselves to Cayley graphs, as there we can use the neutral element as a base point for our fundamental group.

We first need a coarse notion of what it means for two paths to be homotopic.
Classically, two paths are homotopic if we can deform one path into the other in a continuous way. In the coarse setting, we take quasi-paths and we will make these deformations discrete.
This will depend on a constant $r>0$. An $r$-path $p$ in a metric space $X$ is an $r$-Lipschitz map $p\colon \{0,\ldots,\ell(p)\}\to X$. 
Note that a path in a graph is a $1$-path.

The deformation of the paths will also depend on the constant $r$, and if such a deformation exists, we will call the two paths \close{r}.
We say that two paths $p$ and $q$ in a graph $\mathcal{G}$ are \close{r} if one of the two following cases is satisfied:
\begin{enumerate}
\item[(a)]
For every $i\le \min(\ell(p),\ell(q))$ we have that $p(i)=q(i)$ and for bigger $i$ we either have $p(i)=p(\ell(q))$ or $q(i)=q(\ell(p))$, depending on which path is defined at $i$.
\item[(b)]
We have that $\ell(p)=\ell(q)$ and for every $0\le i\le \ell(p)$ we have $d(p(i),q(i))\le r$.
\end{enumerate}
%Essentially we say that $p$ and $q$ are \close{r}, if you can go from one to the other by only jumping over holes of "size" at most $r$.
Now we define a coarse version of homotopy by combining these deformations.
%Now we define a coarse version of homotopy as homotopic by jumping over these holes of size at most $r$ and we define the corresponding fundamental group.
\begin{definition}
Let $\mathcal{G}$ be a graph, let $r>0$ be a constant and let $p$ and $q$ be two $r$-paths in $\mathcal{G}$. We say that $p$ and $q$ are \homotopic{r} if there exists a sequence $p_0=p$, $p_1$,\ldots, $p_n=q$ such that $p_i$ is \close{r} to $p_{i-1}$ for every $i\in\{1,2,\ldots, n\}$.
\end{definition}

We can now define the fundamental group up to $r$-homotopy as the group of $r$-homotopy equivalence classes of $r$-loops rooted in a basepoint, with the group operation corresponding to concatenation of loops.

\begin{definition}
The fundamental group up to $r$-homotopy $A_{1,r} (\mathcal{G},x)$ is defined to be the group of equivalence classes of $r$-loops rooted in a basepoint $x$ with the operation $\ast\colon A_{1,r} (\mathcal{G},x)^2\to A_{1,r} (\mathcal{G},x)\colon ([p],[q]) \mapsto [p\ast q]$ with $$p\ast q\colon \{0,1,\ldots, \ell(p)+\ell(q)\}\to \mathcal{G}\colon \left\{ \begin{array}{cccl}i&\mapsto& p(i)& \text{if }0\le i\le \ell(p), \\ i&\mapsto& q(i-\ell(p))& \text{if }\ell(p)+1\le i\le \ell(p)+\ell(q). \end{array}\right.$$
\end{definition}
%Note that is quite straightforward to show that $A_{1,r} (\mathcal{G})$ is a group. The operation is well-defined, since you can paste the $r$-homotopy in $[p]$ and in $[q]$ to find an $r$-homotopy in $[p\ast q]$. If $p$ and $q$ are loops rooted at the same point, then $p\ast q$ is clearly again such a loop. It is also clear that $\ast$ is associative. The neutral element is the trivial loop $p\colon \{0\}\to \mathcal{G}\colon 0\mapsto x$, where $x$ is the base point. Finally we need to show that there exists an inverse, in fact $p^{-1}\colon \{0,1,\ldots,\ell(p)\}\to \mathcal{G}\colon i\mapsto p(\ell(p)-i)$ is the inverse. In \cref{inverse} we shows that this is indeed the inverse of $p$.

As we will focus on the case where $\mathcal{G}$ is a Cayley graph $\Cay(G,S)$, we will write $A_{1,r}(\mathcal{G})=A_{1,r}(\mathcal{G},e)$, since the basepoint will always be taken to be the identity element.

\section{Coarse homotopy}

In this section we give some properties of paths with respect to $r$-homotopy, and then use these to prove our main results.

\subsection{$r$-homotopic paths}

We will now prove some elementary propositions about $1$-paths, and then show that for a given $r$-path, there is a $1$-path which is $r$-homotopic to it. We do this because it will be useful to consider $1$-paths in the proof of the main theorem. We remark that a $1$-path is also an $r$-path for $r\geq 1$. 

The first proposition shows that we can remove any backtracking (including staying at the same vertex) of a $1$-path. Note that given a 1-path in the Cayley graph, there is a corresponding word in $S\cup\{e\}$, which is the word read along the labelled edges of the path (with an occurrence of $e$ denoting staying at a vertex).
\begin{proposition}\label{reduce}
Let $G=\langle S\rangle$ be a group and let $r\ge1$. Let $p$ be a $1$-path in $\Cay(G)$, let $w$ be the word in elements of the set $S\cup\{e\}$ corresponding to $p$ and let $q$ be the path corresponding to the reduced version of $w$. Then $q$ is \homotopic{r} to $p$.
\end{proposition}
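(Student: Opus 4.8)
The plan is to reduce the word $w$ one elementary cancellation at a time, showing that each single cancellation (removing one occurrence of $ss^{-1}$, $s^{-1}s$, or $e$) produces a path that is \homotopic{r} to the original. Since the reduced version of $w$ is obtained by finitely many such elementary cancellations, and $r$-homotopy is transitive by definition (it is generated by chains of \close{r} paths), chaining these together will give the result. So the crux is a local statement: if $p$ is a $1$-path and $p'$ is obtained from $p$ by deleting a single backtrack, then $p$ and $p'$ are \homotopic{r}.

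\textbf{First} I would set up the local step precisely. Suppose the word $w$ contains a cancelling pair at positions $i$ and $i+1$, meaning $p(i-1)=p(i+1)$ (the path leaves a vertex and immediately returns, or stays put if the letter is $e$). I want to exhibit a short \homotopic{r} deformation turning $p$ into the path $p'$ that omits these two steps. The natural intermediate is a path that first \emph{pauses} at the repeated vertex before continuing: consider the path $\tilde p$ which agrees with $p$ up to index $i-1$, then stays at $p(i-1)=p(i+1)$ for the two indices where $p$ made its excursion, and thereafter follows $p$ (reindexed). Comparing $p$ with $\tilde p$, both have the same length, and at every index they differ by at most the size of one backtracking excursion, which is at most $1$ in the graph metric since consecutive vertices of a $1$-path are at distance $\le 1$; hence $p$ and $\tilde p$ satisfy condition (b) with $r\ge 1$. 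Then comparing $\tilde p$ with $p'$, these agree on an initial segment and then one is the ``stalled'' extension of the other, so they satisfy condition (a). Thus $p \sim_{(b)} \tilde p \sim_{(a)} p'$, giving $p$ \homotopic{r} to $p'$.

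\textbf{The main obstacle} I anticipate is getting the bookkeeping of indices and the two deformation types (a) and (b) to match the definitions exactly, since the two cases are asymmetric: case (a) handles length changes by stalling at an endpoint, while case (b) handles same-length perturbations bounded by $r$. The deletion of a backtrack changes the length by two, so a single application of (b) cannot do it directly — one must first absorb the excursion into a stall (using (b) to flatten the two-step detour into two stationary steps, both within distance $1$ of the detour) and then use (a) to drop the now-redundant stationary tail. Care is needed to confirm the stationary intermediate $\tilde p$ is genuinely an $r$-path (it is, as staying put is $r$-Lipschitz) and that condition (a) is being applied to paths that literally coincide on their common domain.

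\textbf{Finally} I would note that the case where $w$ contains the letter $e$ (a self-loop / staying at a vertex) is handled by the same argument with an even simpler intermediate, since there is no spatial excursion to flatten. Iterating the local step over all cancellations in $w$ and invoking transitivity of \homotopic{r} completes the proof that $q$, the path of the fully reduced word, is \homotopic{r} to $p$.
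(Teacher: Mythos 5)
Your overall strategy (flatten a backtrack into two stationary steps via a same-length move of type (b), then delete the stationary steps via type (a), and iterate over all cancellations) is the same as the paper's, but there is a genuine gap at the deletion step. Condition (a) requires the two paths to coincide on the \emph{entire} common domain $\{0,\dots,\min(\ell(p),\ell(q))\}$, with the longer path constant thereafter; in other words, (a) only lets you append or remove a stationary tail at the \emph{end} of a path. Your intermediate $\tilde p$ stalls in the \emph{middle}: writing $w=u_1ss^{-1}u_2$, your $\tilde p$ is the path of $u_1e^{2}u_2$ while $p'$ is the path of $u_1u_2$. If $u_2$ is nonempty, these two paths already disagree at index $\ell(u_1)+1$ (one is still sitting at the endpoint of $u_1$, the other has advanced into $u_2$), so they are not \close{r} in the sense of (a), and the claimed step $\tilde p\sim_{(a)}p'$ fails except when the cancellation occurs at the very end of the word. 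You flagged this worry yourself (``condition (a) is being applied to paths that literally coincide on their common domain'') but did not resolve it.

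The missing idea is to first push each stationary letter to the end of the word: the paths of $u_1eu_2$ and $u_1u_2e$ have the same length and are pointwise at distance at most $1\le r$ (from index $\ell(u_1)+1$ onwards the second is the first advanced by one letter of $u_2$, so corresponding vertices are adjacent), hence they are \close{r} in the sense of (b); only after this shuffle can one invoke (a) to truncate the stalled tail and obtain the path of $u_1u_2$. This is exactly how the paper's proof of \cref{reduce} proceeds. With that repair your argument goes through and is essentially identical to the paper's.
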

\begin{proof}
Suppose that $w$ is a word that is not in its reduced form, then it can be written as $w=u_1ss^{-1}u_2$ or $w=u_1eu_2$, where $u_1$ and  $u_2$ are words in the elements of $S\cup\{e\}$ and $s\in S$.
By induction it suffices to show that the $1$-path corresponding to $u_1u_2$ is \homotopic{r} to $p$, because we can reduce $w$ by removing these $ss^{-1}$ and $e$ until $w$ is in its reduced form.

In the case that $w=u_1eu_2$ we see that $p$ is \homotopic{r} to the $1$-path corresponding to the word $u_1u_2e$, call this path $\tilde{q}$. Then removing $\tilde{q}(\ell(\tilde{q}))$, we recover the $1$-path corresponding to $u_1u_2$, and thus we see that the 1-path corresponding to $w$ is $r$-homotopic to the 1-path corresponding to $u_1u_2$.

In the case that $w=u_1ss^{-1}u_2$, we see that $p$ is \homotopic{r} to the $1$-path corresponding to the word $u_1e^{2}u_2$, call this path $q'$
(in fact $p$ and $q'$ are \close{r}, because $\ell(p) = \ell(u_1)+2+\ell(u_2) = \ell(q')$ and $d(p(i),q'(i))\le r$). Then, by inductively using the other case, we show that $p$ is \homotopic{r} to the $1$-path corresponding to $u_1u_2$.
\end{proof}

The second proposition shows that $r$-homotopies can pass holes of ``size'' $2r$.
\begin{proposition}\label{jump}
Let $G$ be a group and let $r\ge 1$. Let $u$, $v$ and $w$ be three words in the elements $S$ such that $uw$ and $v$ correspond to $e$ in $G$ and $\ell(v)\le 2r$. Then the $1$-paths corresponding to $uw$ and $uvw$ are \homotopic{r}.
\end{proposition}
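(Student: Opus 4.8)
The plan is to turn the loop $uvw$ into $uw$ in two stages: first contract the inserted subloop $v$ onto a single vertex by comparing it pointwise with a constant path, and then delete that constant path using \cref{reduce}. Write $g$ for the element of $G$ represented by the prefix $u$, so that in the path spelled by $uvw$ the $v$-portion is traversed from $g$ back to $g$ (here I use that $v$ represents $e$ in $G$, so reading $v$ from $g$ returns to $g$). Both $uw$ and $uvw$ are loops based at $e$, since $uw$ and $uvw = u\cdot e\cdot w$ each represent $e$.

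The key geometric point, and the only place the hypothesis $\ell(v)\le 2r$ is used, is that every vertex of the $v$-subloop lies within distance $r$ of $g$. Indeed, the vertex visited after $j$ steps of $v$ is reachable from $g$ either forwards in $j$ steps or backwards along the loop in $\ell(v)-j$ steps, so its distance to $g$ is at most $\min(j,\ell(v)-j)\le \ell(v)/2\le r$.

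With this in hand, I would let $c$ be the constant $1$-path of length $\ell(v)$ at the vertex $g$ and consider the path $ucw$ (the $1$-path spelled by $u\,e^{\ell(v)}\,w$). I claim $uvw$ and $ucw$ are \close{r} via case (b) of the definition: they have equal length $\ell(u)+\ell(v)+\ell(w)$; they agree pointwise on the initial $u$-block and on the final $w$-block (after the middle block both paths sit at $g$ and then follow $w$); and on the middle block the $i$-th vertex of $ucw$ is exactly $g$, while the corresponding vertex of $uvw$ lies within $r$ of $g$ by the estimate above. Hence $d(uvw(i),ucw(i))\le r$ for every $i$, so the two paths are \homotopic{r}.

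Finally, the word $u\,e^{\ell(v)}\,w$ reduces to the same reduced word as $uw$, so by \cref{reduce} both $ucw$ and $uw$ are \homotopic{r} to the $1$-path spelled by that common reduced word, hence to each other. Since $r$-homotopy is transitive (one simply concatenates the defining sequences of \close{r} paths), chaining $uvw$ \homotopic{r} $ucw$ with $ucw$ \homotopic{r} $uw$ gives $uvw$ \homotopic{r} $uw$, as required. I do not expect a genuine obstacle; the only points needing care are the distance estimate on the $v$-subloop and the verification that the endpoints of the middle block coincide ($g$ on both sides) so that the $w$-blocks really do match pointwise.
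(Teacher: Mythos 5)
Your proof is correct and follows essentially the same route as the paper: the paper also replaces $v$ by the constant block $e^{\ell(v)}$ (noting the two paths are \close{r} because $\ell(v)\le 2r$) and then invokes \cref{reduce} to delete it. You have merely made explicit the distance estimate $\min(j,\ell(v)-j)\le \ell(v)/2\le r$ that the paper leaves implicit.
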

In fact it is possible to show that the proposition still holds for holes of size $4r$ (i.e. $\ell(v)\le 4r$), but we restrict ourselves to $\ell(v)\le 2r$ to avoid unnecessarily complicating the proof.
\begin{proof}
As $\ell(v)\le 2r$ the $1$-path corresponding to $uvw$ is \homotopic{r} to the one corresponding to $ue^{\ell(v)}w$ and due to \cref{reduce} they are \homotopic{r} to the $1$-path corresponding to $uw$.
\end{proof}

Finally, we prove a proposition which will allow us to work with $1$-paths in the next section.
\begin{proposition}\label{path}
Let $\mathcal{G}$ be a graph, let $r\ge 1$ and $p$ be an $r$-path in $\mathcal{G}$, then there exists a $1$-path $q$ in $\mathcal{G}$ which is \homotopic{r} to $p$.
\end{proposition}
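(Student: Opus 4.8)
The plan is to replace each ``long jump'' of the $r$-path $p$ by a genuine edge-path and to verify, step by step, that this can be done up to $r$-homotopy. Write $n=\ell(p)$ and set $m_i=d(p(i),p(i+1))\le r$ for $0\le i<n$. For each $i$ choose a geodesic $\gamma_i$ from $p(i)$ to $p(i+1)$; this is a $1$-path of length $m_i$. Let $q=\gamma_0\ast\gamma_1\ast\cdots\ast\gamma_{n-1}$, which is a $1$-path from $p(0)$ to $p(n)$, since consecutive geodesics are glued at the common vertex $p(i+1)$. I will connect $p$ to $q$ through an auxiliary ``stalled'' path $\hat p$ of length $M=\sum_{i<n}m_i$ that reaches $p(i)$ at time $\sigma_i=\sum_{j<i}m_j$ and remains constant equal to $p(i)$ on the block $\{\sigma_i,\dots,\sigma_{i+1}-1\}$.

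The technical core is a stalling lemma: inserting one constant step into an $r$-path, at any position, yields an $r$-homotopic path. To see this, suppose $P=\mu\ast\nu$ with $\mu$ ending and $\nu$ beginning at a common vertex $c$, and let $P'$ be $P$ with one extra constant step at $c$ inserted at the junction. First I would extend $P$ by a single constant step at its endpoint, obtaining a path $\bar P$ with $\ell(\bar P)=\ell(P')$; this move is of type (a). Now $\bar P$ and $P'$ have equal length and agree on the prefix $\mu$, while beyond the junction $P'$ is exactly $\bar P$ shifted by one step, so at every index the two vertices are either equal or consecutive vertices of $\nu$, hence at distance $\le r$. Thus $\bar P$ and $P'$ are \close{r} by a move of type (b), and $P$ is \homotopic{r} to $P'$.

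Applying this lemma $M-n$ times, inserting $m_i-1$ stalls just after each visit to $p(i)$, shows that $p$ is \homotopic{r} to the stalled path $\hat p$. Finally $\hat p$ and $q$ both have length $M$, and on each half-open block $\sigma_i\le t<\sigma_{i+1}$ one has $\hat p(t)=p(i)=\gamma_i(0)$ while $q(t)=\gamma_i(t-\sigma_i)$, whence $d(\hat p(t),q(t))=t-\sigma_i\le m_i\le r$ because $\gamma_i$ is a geodesic; since the two paths also agree at every block boundary $t=\sigma_{i+1}$ (both equal $p(i+1)$), they are \close{r} by a single move of type (b). Combining the two stages, $p$ is \homotopic{r} to the $1$-path $q$, as required.

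The main obstacle is precisely that the notion of \close{r} only permits changing a path's length by stalling at the \emph{end} (case (a)) or a pointwise deformation between paths of \emph{equal} length (case (b)); there is no primitive move that lengthens a path in the middle. The stalling lemma of the second step is what overcomes this, by trading an end-stall for a mid-stall via one equal-length sliding deformation, and I expect its verification to be the only delicate point. The remaining assembly is routine bookkeeping with the indices $\sigma_i$.
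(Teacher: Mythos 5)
Your proof is correct, and it rests on the same underlying idea as the paper's: replace each jump of $p$ by a geodesic, and reconcile the resulting change of length by one end-extension of type (a) followed by an equal-length slide of type (b). The organization is genuinely different, though. The paper proceeds by induction, repairing only the first non-unit step: it prepends a geodesic $g$ from $p(0)$ to $p(1)$ and shifts the entire tail by $\ell(g)$, and the type-(b) comparison there is only valid because the induction hypothesis guarantees the tail is already a $1$-path (so a shift by $\ell(g)\le r$ displaces points by at most $r$). Your stalling lemma avoids any induction: each insertion shifts by a single step, which is automatically an $r$-displacement for an arbitrary $r$-path, and all geodesics are then substituted at once in a single final type-(b) move. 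The lemma itself is verified correctly and is a nice reusable statement. The one imprecision is the degenerate case $m_i=d(p(i),p(i+1))=0$, where ``inserting $m_i-1$ stalls'' does not parse and $\hat p$ is locally shorter than $p$; this is harmless, since $r$-closeness is symmetric so the stalling lemma read backwards deletes a constant step (or one can simply take $\gamma_i$ to be the constant $1$-path of length $1$ and set $m_i=1$ in the bookkeeping), but it should be addressed.
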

\begin{proof}
If $\ell(p)=0$, then $p$ is already a $1$-path.

If $\ell(p)\ge 1$, then by induction we can assume that $d(p(i),p(i+1))\le 1$ for $i\ge 1$. As $\mathcal{G}$ is a graph we can take a geodesic $1$-path $g$ between $p(0)$ and $p(1)$. Then take $$q\colon \{0,\ldots,\ell(p)+\ell(g)\}\to \mathcal{G}\colon \left\{ \begin{array}{cccl}i&\mapsto& g(i)& \text{if }0\le i\le \ell(g) \\ i&\mapsto& p(i-\ell(g))& \text{if }\ell(g)+1\le i\le \ell(p)+\ell(g). \end{array}\right.$$
Now $q$ is a $1$-path and since $\ell(g)\le r$, it is \close{r} to
$$\tilde{q}\colon \{0,\ldots,\ell(p)+\ell(g)\}\to \mathcal{G}\colon \left\{ \begin{array}{cccl}i&\mapsto& p(i)& \text{if }0\le i\le \ell(p) \\ i&\mapsto& p(\ell(p))& \text{if }\ell(p)+1\le i\le \ell(p)+\ell(g), \end{array}\right.$$
which is \close{r} to $p$. Thus $q$ is \homotopic{r} to $p$.
\end{proof}

\subsection{Box spaces of finitely presented groups}
In this section we will prove the main result, that box spaces of finitely presented groups eventually detect the normal subgroups used to construct them, even when we look up to coarse equivalence, i.e.\ given the coarse equivalence class of a box space, one can deduce the sequence of normal subgroups (from some index onwards).

In order to show this, we first prove that coarse fundamental groups of a Cayley graph of a quotient can detect the normal subgroup used to construct the quotient.

Given a presentation $G=\langle S| R \rangle$, and a normal subgroup $N\lhd G$, we denote by $|-|_{F_S}$ the length of relators in $R$ viewed as a subset of the free group $F_S$ on the set $S$ with its natural metric, and by $|-|_G$ the length of elements of $G$ in the Cayley graph $\Cay(G,S)$.

\begin{lemma}\label{detect}
Let $G$ be a finitely presented group with $G=\langle S| R \rangle$, let $k$ be equal to $\max\{|g|_{F_S}:g\in R\}$, let $N\lhd G$ with $2k< n = \inf\{|g|_G:g\in N\setminus\{e\}\}$, and let $r$ be a constant such that $2k\le 4r <n$.
Then $A_{1,r}(\Cay(G/N,\bar{S}),e)$ is isomorphic to $N$.
\end{lemma}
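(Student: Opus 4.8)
The plan is to build an explicit isomorphism $\Phi\colon A_{1,r}(\Cay(G/N,\bar S),e)\to N$ by reading off the element of $G$ that an $r$-loop traces out. Write $\pi\colon F_S\to G$ for the quotient homomorphism. Given an $r$-loop $p$ based at $e$, I would choose for each step $i$ a geodesic word $g_i\in F_S$ (of length at most $r$) labelling a geodesic in $\Cay(G/N,\bar S)$ from $p(i)$ to $p(i+1)$, and set $\Phi(p)=\pi(g_0g_1\cdots g_{\ell(p)-1})$. Since $p$ closes up at $e$ in $G/N$, the concatenated word labels a loop and so $\pi(g_0\cdots g_{\ell(p)-1})$ lies in $N$. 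This is independent of the geodesic choices: replacing $g_i$ by another geodesic word $g_i'$ changes the total element by a conjugate of $\pi(g_i(g_i')^{-1})$, which is an element of $N$ of $G$-length at most $2r<n$ and hence trivial in $G$. That $\Phi$ is a surjective homomorphism is then immediate, since concatenation of loops corresponds to concatenation of words, $\pi$ is a homomorphism, and any $g\in N$ equals $\pi(w)$ for a word $w$ that spells a loop in $\Cay(G/N,\bar S)$ realising $g$.

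The crux is showing that $\Phi$ is constant on $r$-homotopy classes, for which it suffices to treat a single elementary $r$-closeness move. Case (a) of the definition only inserts or deletes steps that stay at a vertex, which does not change $\pi$ of the traced word. The substantial case is (b), where $\ell(p)=\ell(p')$ and $d(p(i),p'(i))\le r$ for all $i$. Here I would additionally choose, for each $i$, a geodesic word $v_i$ of length at most $r$ joining $p(i)$ to $p'(i)$, noting that $v_0$ and $v_{\ell(p)}$ are empty as both loops are based at $e$. For each $i$ the word $g_iv_{i+1}(v_ig_i')^{-1}$ labels a loop of length at most $4r$; since it closes up in $G/N$ it represents an element of $N$ of $G$-length at most $4r<n$, hence trivial in $G$, giving $\pi(g_i)\,\pi(v_{i+1})=\pi(v_i)\,\pi(g_i')$. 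Setting $h_i=\pi(v_i)$, this reads $\pi(g_i)=h_i\,\pi(g_i')\,h_{i+1}^{-1}$, and multiplying over $i$ telescopes to $\Phi(p)=h_0\,\Phi(p')\,h_{\ell(p)}^{-1}=\Phi(p')$ because $h_0=h_{\ell(p)}=e$. This length estimate, which rests squarely on the hypothesis $4r<n$, is the main obstacle and the reason the scale $r$ must be confined to the window $2k\le 4r<n$.

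It remains to prove injectivity, and this is where \cref{path}, \cref{reduce} and \cref{jump} enter. If $\Phi([p])=e$, I would use \cref{path} to replace $p$ by an $r$-homotopic $1$-loop and \cref{reduce} to take the corresponding word $w\in F_S$ to be reduced; then $\pi(w)=e$ in $G$, so $w$ lies in the normal closure of $R$, i.e.\ $w$ is freely equal to a product of conjugates of relators of $G$. Each relator has length at most $k\le 2r$, so by \cref{jump} inserting or deleting such a subword is an $r$-homotopy, and the free reductions are handled by \cref{reduce}. Peeling off the conjugated relators one at a time therefore $r$-homotopes $p$ to the constant loop, so $[p]$ is trivial in $A_{1,r}(\Cay(G/N,\bar S),e)$. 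Combining the three paragraphs, $\Phi$ is a bijective homomorphism, giving the claimed isomorphism $A_{1,r}(\Cay(G/N,\bar S),e)\cong N$.
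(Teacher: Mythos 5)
Your proof is correct, and it is essentially the mirror image of the paper's argument: the paper constructs the map in the opposite direction, $\Phi\colon N\to A_{1,r}(\Cay(G/N,\bar S),e)$, sending $g\in N$ to the $1$-loop spelled by a word representing it. Your injectivity step (peeling off conjugated relators using \cref{reduce} and \cref{jump}) is exactly the paper's proof that its $\Phi$ is well defined, and your surjectivity is the paper's surjectivity via \cref{path}. The genuine difference lies in the hard step. The paper proves injectivity of $N\to A_{1,r}$ by lifting each $r$-loop occurring in a null-homotopy to $\Cay(G)$ (unique path lifting, using $2r<n$), locating the last index at which the lifted endpoint equals $e$, and deriving the contradiction $n\le 4r<n$ from a quadrilateral with sides of length at most $r$. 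You instead realise that lifted endpoint intrinsically as a holonomy $\pi(g_0g_1\cdots g_{\ell(p)-1})\in N$ and show directly, via the telescoping identity $\pi(g_i)=h_i\,\pi(g_i')\,h_{i+1}^{-1}$ extracted from the same $4r$-perimeter quadrilateral, that it is invariant under an elementary move of type (b). This yields a constructive, contradiction-free argument that never mentions $\Cay(G)$ or path lifting, at the modest cost of having to verify independence of the auxiliary geodesic choices (which you do, using $2r<n$). Both proofs consume the hypothesis $2k\le 4r<n$ in the same two places --- relators of length at most $k\le 2r$ can be filled in, and elements of $N$ of length at most $4r$ must be trivial --- so the quantitative content is identical; your packaging arguably makes it more transparent which inequality is doing which job.
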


\begin{proof}
Define $\Phi\colon N\to A_{1,r}(\Cay(G/N),e)$ as follows: For $g\in N$ write $g$ as a word in the elements of $S$. This corresponds to a $1$-loop in $\Cay(G/N)$ based at $e$. We take $\Phi(g)$ equal to this loop (note that it is in particular an $r$-loop). 

To show that $\Phi$ is uniquely defined, suppose that $g\in N$ can be written in two ways as a word in the elements of $S$, say $v$ and $w$. Due to \cref{reduce} we can assume these words are reduced as elements of the free group $F_S$ on $S$. As $v$ and $w$ realize the same element in $G$ we can write $w^{-1}v = a$ where $a$ is in the normal subgroup of $F_S$ generated by the elements of $R$, so we can write $a = a_1a_2\ldots a_m$ with each $a_i$ the conjugate of an element in $R$.

As being \homotopic{r} is an equivalence relation, it suffices to show that the paths corresponding to the words $w$ and $whbh^{-1}$ are \homotopic{r} for every $b\in R$, every $h\in F_S$ and every word in $w$ in $F_S$ representing a loop in $G/N$.

Since $\ell(b)\le k \le 2r$, we have that $whbh^{-1}$ and $whh^{-1}$ correspond to \homotopic{r} loops due to \cref{jump}, and then by \cref{reduce} we have that these loops are \homotopic{r} to the one corresponding to $w$.

Thus $\Phi$ is well-defined. We now need to show that $\Phi$ is an isomorphism.

It is clearly a homomorphism, because of the correspondence between words in $F_S$ and paths in $\Cay(G/N)$.

To show that it is injective, it suffices to show that $\Phi(g)$ is not null-\homotopic{r} if $g\in N\setminus\{e\}$. Therefore we suppose there exists a element $g$ that does correspond to a null-\homotopic{r} loop. This means there exists a sequence $p_0$, $p_1$, \ldots, $p_n=\Phi(g)$, where $p_0$ is the trivial loop and $p_i$ is \close{r} to $p_{i+1}$ for every $i$.

Now as $2r< n$, there is a unique way of lifting an $r$-path in $\Cay(G/N)$ to an $r$-path in $\Cay(G)$, and
so we can take $\tilde{p}_i$ to be the lift of $p_i$ for every $i$. We know that $\tilde{p_0}(\ell(p_0))=e$ and $\tilde{p}_n(\ell(p_n))=g$. Now take $i$ to be the biggest value such that $\tilde{p}_i(\ell(p_i))=e$. Then $\tilde{p}_{i+1}(\ell(p_{i+1}))\neq e$ (since $\tilde{p}_n(\ell(p_n))=g$ and $g\neq e$).
However $\tilde{p}_{i+1}(\ell(p_{i+1}))\in N$, since $p_{i+1}$ is an $r$-loop in $\Cay(G/N)$, so $d(\tilde{p}_i(\ell(p_i)),\tilde{p}_{i+1}(\ell(p_{i+1})))\ge n> r$.

We know that $p_i$ is \close{r} to $p_{i+1}$ and there are two ways, (a) and (b), in which this can happen, according to the definition. If we were in the case of (a), then $p_i$ would be equal to $p_{i+1}$ on the interval where both of them are defined and constantly $e$ on the rest, and so then we would have that $e=\tilde{p}_i(\ell(p_i))=\tilde{p}_{i+1}(\ell(p_{i+1}))$. But we had assumed that $i$ is the biggest value for which $\tilde{p}_i(\ell(p_i))=e$, so the paths must be $r$-close as in (b), i.e. $p_i$ and $p_{i+1}$ are of the same length and $d(p_i(x),p_{i+1}(x))\le r$ for every $0\le x\le \ell(p_i)$.

Now $d(\tilde{p}_i(\ell(p_i)),\tilde{p}_{i+1}(\ell(p_i)))\ge n$ while $d(\tilde{p}_i(0),\tilde{p}_{i+1}(0))=0$.
Therefore there exists a $j$ such that $d(\tilde{p}_i(j),\tilde{p}_{i+1}(j))\le r$ and $d(\tilde{p}_i(j+1),\tilde{p}_{i+1}(j+1))> r$. However, since  $d(p_i(j+1),p_{i+1}(j+1))\le r$, there exists a $h\in N\setminus\{e\}$ such that $d(\tilde{p}_i(j+1),\tilde{p}_{i+1}(j+1)h)\le r$. 

We also note that $n \le d(\tilde{p}_{i+1}(j+1),\tilde{p}_{i+1}(j+1)h)$, since $d(\tilde{p}_{i+1}(j+1),\tilde{p}_{i+1}(j+1)h)= |h|_G$, and $h$ is a non-trivial element of $N$, so that $|h|_G>n$.

Now we can make the following computation:
\begin{eqnarray*}
n & \le & d(\tilde{p}_{i+1}(j+1),\tilde{p}_{i+1}(j+1)h)\\
& \le & d(\tilde{p}_{i+1}(j+1),\tilde{p}_{i+1}(j)) + d(\tilde{p}_{i+1}(j),\tilde{p}_i(j))\\
& & + d(\tilde{p}_i(j),\tilde{p}_i(j+1)) + d(\tilde{p}_i(j+1),\tilde{p}_{i+1}(j+1)h)\\
& \le & r + r + r + r\\
& \le & 4r < n.
\end{eqnarray*}
As this is impossible, we find that $\Phi(g)$ can not be null-\homotopic{r} and therefore $\Phi$ is injective.

To show that $\Phi$ is surjective, take an $r$-loop in $\Cay(G/N)$. Due to \cref{path} this loop is \homotopic{r} to a $1$-loop $p$. This loop corresponds to a word $w$ in $F(S)$ and this word gets mapped to an element $g$ in $G$ via the map $F_S\to G$. As $p$ is a loop, $g\in N$. Now by definition $\Phi(g) = p$.

So we can conclude that $\Phi$ is an isomorphism.
\end{proof}
We need the following lemma which tells us how $A_{1,r}$ behaves under quasi-isometries of the underlying graph.
Given two graphs $\mathcal{G}$ and $\mathcal{H}$, recall that $\phi: \mathcal{G} \rightarrow \mathcal{H}$ is a quasi-isometry with constant $C>0$ if for all $x,y \in \mathcal{G}$, 
$$d_{\mathcal{H}}(\phi(x),\phi(y)) \leq C d_{\mathcal{G}}(x,y) + C$$
and there exists a quasi-inverse map $\phi': \mathcal{H} \rightarrow \mathcal{G}$ which satisfies $d_{\mathcal{G}}(\phi'(x),\phi'(y)) \leq C d_{\mathcal{H}}(x,y) + C$ for all $x,y \in \mathcal{H}$,
$d_{\mathcal{G}}(x,\phi'(\phi(x)))\leq C$ for all $x\in \mathcal{G}$ and $d_{\mathcal{H}}(y,\phi(\phi'(y)))\leq C$ for all $y\in \mathcal{H}$.

\begin{lemma}\label{invar}
Let $C>0$ be a constant, let $\mathcal{G}\simeq_{\text{QI}}\mathcal{H}$ be two Cayley graphs that are quasi-isometric with constant $C$ and let $r\ge 2C$. Then there exists a homomorphism $ \Psi\colon A_{1,r}(\mathcal{G})\to A_{1,Cr+C} (\mathcal{H})$ that is surjective.
\end{lemma}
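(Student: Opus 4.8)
The plan is to realise $\Psi$ by pushing loops forward along a quasi-isometry $\phi\colon\mathcal{G}\to\mathcal{H}$ and to obtain surjectivity by pulling loops back along a quasi-inverse $\phi'$. Because left-translation by a fixed element is an isometry of a Cayley graph that leaves the quasi-isometry constant unchanged, I first compose $\phi$ with the translation sending $\phi(e)$ to the identity, so that I may assume $\phi(e_{\mathcal{G}})=e_{\mathcal{H}}$. For an $r$-loop $p$ rooted at $e_{\mathcal{G}}$ the estimate $d_{\mathcal{H}}(\phi(p(i)),\phi(p(i+1)))\le C\,d_{\mathcal{G}}(p(i),p(i+1))+C\le Cr+C$ shows that $\phi\circ p$ is a $(Cr+C)$-loop rooted at $e_{\mathcal{H}}$, and I define $\Psi([p])=[\phi\circ p]$.

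That $\Psi$ is well defined follows from checking that $\phi$ carries $r$-close loops to $(Cr+C)$-close loops: in case (b) equal lengths are preserved while pointwise distances at most $r$ become at most $Cr+C$, and in case (a) both the agreement on the common initial segment and the trailing constant part are preserved under composition with $\phi$. Applying this term by term turns an $r$-homotopy in $\mathcal{G}$ into a $(Cr+C)$-homotopy in $\mathcal{H}$ through loops rooted at $e_{\mathcal{H}}$, so $\Psi$ descends to classes. The homomorphism property is immediate from $\phi\circ(p\ast q)=(\phi\circ p)\ast(\phi\circ q)$.

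The substance of the proof is surjectivity, and this is where the hypothesis $r\ge 2C$ is essential. Given a $(Cr+C)$-loop $q$ rooted at $e_{\mathcal{H}}$, I first apply \cref{path} to replace it, up to $(Cr+C)$-homotopy, by a $1$-loop $\tilde q$ still rooted at $e_{\mathcal{H}}$ (the construction in \cref{path} prepends a geodesic and so leaves the endpoints fixed). The point of straightening $q$ to a $1$-loop is that the consecutive distances along $\phi'\circ\tilde q$ are then at most $C\cdot 1+C=2C\le r$, so that $\phi'\circ\tilde q$ is an honest $r$-path in $\mathcal{G}$; this step would fail for a general $(Cr+C)$-path, whose $\phi'$-image only satisfies a bound of order $C^2r$. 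Since $\phi'(e_{\mathcal{H}})$ lies within $C\le r$ of $e_{\mathcal{G}}$, I prepend and append a single $r$-step joining $e_{\mathcal{G}}$ to $\phi'(e_{\mathcal{H}})$ to turn $\phi'\circ\tilde q$ into an $r$-loop $p$ rooted at $e_{\mathcal{G}}$.

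It then remains to check that $\Psi([p])=[q]$, that is, that $\phi\circ p$ is $(Cr+C)$-homotopic to $\tilde q$. Comparing $\phi\circ p$ with the loop obtained from $\tilde q$ by appending two trailing constant steps (which differs from $\tilde q$ by a single case-(a) move), the quasi-inverse estimate $d_{\mathcal{H}}(y,\phi(\phi'(y)))\le C$ together with the fact that $\tilde q$ is a $1$-loop bounds the pointwise distance between the two loops by $C+1\le Cr+C$, so they are $(Cr+C)$-close in the sense of case (b). Chaining these moves gives $\phi\circ p\simeq_{Cr+C}\tilde q\simeq_{Cr+C}q$, whence $\Psi$ is surjective. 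The only genuine obstacle is the surjectivity argument, and specifically the observation that one must straighten the target loop to a $1$-loop before pulling it back, so that the preimage stays within the allowed step size $r$; the remaining basepoint and padding bookkeeping is routine.
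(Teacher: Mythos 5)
Your proposal is correct and follows essentially the same route as the paper: push loops forward along $\phi$ (normalised to fix the basepoint), verify both closeness cases for well-definedness, and for surjectivity straighten the target loop to a $1$-loop via \cref{path} before pulling back along $\phi'$ so that the preimage has step size at most $2C\le r$. The only (harmless) difference is bookkeeping: the paper normalises both $\phi$ and $\phi'$ to fix the identity, accepting that $\phi\circ\phi'$ is then only \close{2C} to the identity, whereas you normalise only $\phi$ and instead pad the pulled-back loop with one extra step at each end.
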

It will be very useful to assume that both the quasi-isometry and its quasi-inverse map the neutral element of one group to the neutral element of the other group. It is always possible to do this, as Cayley graphs have a natural isometric action of the group. The composition of the quasi-isometry and its quasi-inverse may then not be \close{C} to the identity map, but it will be \close{2C}.

\begin{proof}
Let $\phi\colon \mathcal{G}\to \mathcal{H}$ be a quasi-isometry with constant $C$. To construct the map $ \Psi\colon A_{1,r}(\mathcal{G})\to A_{1,Cr+C} (\mathcal{H})$, given a path $p$ in $\mathcal{G}$, we define $ \Psi(p)$ by $ \Psi(p)(i) = \phi(p(i))$ for every $0\le i\le \ell(p)$.
As we have assumed that $\phi(e)=e$, we have that $ \Psi(p)$ is a $(Cr+C)$-loop based at $e$.

In order to show that $ \Psi$ is well-defined, we also have to show that when $p$ and $q$ are $r$-loops in $\mathcal{G}$ that are \close{r}, then $ \Psi(p)$ and $ \Psi(q)$ are \close{(Cr+C)}. We can check this for the two cases, (a) and (b),  of being $r$-close.

In case (a), $p(i)=q(i)$ for every $0\le i\le \ell(p)$ and $q(i)=e$ for every $i\ge\ell(p)$, and so $ \Psi(p)(i)= \Psi(q)(i)$ for every $0\le i\le \ell(p)$ and $ \Psi(q)(i)=e$ for every $i\ge\ell(p)$.

In case (b), $\ell(p)=\ell(q)$ and $d(p(i),q(i))\le r$, and so  $d( \Psi(p)(i), \Psi(q)(i))=d(\phi(p(i)),\phi(q(i)))\le Cr+C$.
So $ \Psi(p)$ and $ \Psi(q)$ are \close{(Cr+C)}, whenever $p$ and $q$ are \close{r}.
Therefore $ \Psi$ is well-defined.

Now we will show that $ \Psi$ is surjective. To do so, take a $(Cr+C)$-loop $q$ in $\mathcal{H}$.
Due to \cref{path} $q$ is \homotopic{(Cr+C)} to a $1$-loop $\tilde{q}$.

As $\phi$ is a quasi-isometry, there exists a quasi-inverse $\phi'$. Now we can define an $r$-loop $p$ in $\mathcal{G}$ such that $p(i) = \phi'(\tilde{q}(i))$. This is an $r$-loop since $d(p(i),p(i+1)) = d(\phi'(\tilde{q}(i)),\phi'(\tilde{q}(i+1))) \le 2C\le r$ (as $\tilde{q}$ is a 1-loop and so $d(\tilde{q}(i),\tilde{q}(i+1))\leq 1$).

It suffices to show that $ \Psi(p)$ is \homotopic{(Cr+C)} to $\tilde{q}$. By definition we have that $ \Psi(p)(i) = \phi(p(i)) =  \phi(\phi'(\tilde{q}(i)))$ for every $0\le i\le\ell(p)$, so $d( \Psi(p)(i),\tilde{q}(i))\le 2C\le Cr+C$.
So $ \Psi(p)$ is \homotopic{(Cr+C)} to $q$. Therefore $ \Psi$ is surjective.
\end{proof}

Now we are ready to prove the main result, that coarsely equivalent box spaces must be constructed using essentially the same sequence of normal subgroups.

\begin{theorem}[\cref{main} in Introduction]
Let $G$ be a finitely presented group and $H$ a finitely generated group, with respective filtrations $N_i$ and $M_i$ such that $\Box_{(N_i)}G\simeq_{\text{CE}} \Box_{(M_i)}H$. Then there exists an almost permutation with bounded displacement $f$ of $\N$ such that $N_i$ is isomorphic to $M_{f(i)}$ for every $i$ in the domain of $f$.
\end{theorem}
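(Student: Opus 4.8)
The plan is to decompose the coarse equivalence into a matching of components together with uniform quasi-isometries of the matched components, and then to read off the isomorphisms $N_i\cong M_{f(i)}$ from the coarse fundamental groups via \cref{detect,invar}. First I would produce the almost permutation $f$. Write the components as $X_i=\Cay(G/N_i)$ and $Y_j=\Cay(H/M_j)$, with diameters tending to infinity, and let $\varphi$ be the coarse equivalence with control functions $\rho_\pm$ and coarse inverse $\psi$. Since $\varphi$ sends edges to pairs at distance $\le\rho_+(1)$, the image $\varphi(X_i)$ is $\rho_+(1)$-path-connected; as distinct components of $\Box_{(M_j)}H$ are separated by the sum of their diameters, for all large $i$ the set $\varphi(X_i)$ lies within bounded distance of a single component $Y_{f(i)}$. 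This defines $f$ on a cofinite set, and its injectivity and bounded-displacement property follow as in \cite{AA}, since matched components have comparable diameters, forcing the indices to stay within a bounded window.

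Next I would upgrade the restricted maps to uniform quasi-isometries. The point is that, although $\varphi$ is only assumed to be a coarse equivalence, the bounded geometry of Cayley graphs linearises both bounds: running along a geodesic shows $d(\varphi a,\varphi b)\le\rho_+(1)\,d(a,b)$ for $a,b\in X_i$, and the same observation applied to $\psi$ together with $\psi\varphi\approx\mathrm{id}$ yields a matching linear lower bound. Thus $\varphi|_{X_i}\colon X_i\to Y_{f(i)}$ (followed by nearest-point projection) is a quasi-isometry with a constant $C$ uniform in $i$, with quasi-inverse $\psi|_{Y_{f(i)}}$ of the same constant. By \cite{AA} we also have $G\simeq_{\mathrm{QI}}H$; since $G$ is finitely presented and finite presentability is a quasi-isometry invariant, $H$ is finitely presented as well, say $H=\langle S'|R'\rangle$ with $k'=\max\{|g|_{F_{S'}}:g\in R'\}$.

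Finally I would extract the isomorphisms. Fix $r\ge 2C$ large enough that $2k\le 4r$ and $2k'\le 4(Cr+C)$. Because the filtrations are nested with trivial intersection, the injectivity radii $n_i=\inf\{|g|_G:g\in N_i\setminus\{e\}\}$ and the analogous $m_j$ for $H$ tend to infinity; as $f(i)\to\infty$, for all large $i$ we have $4r<n_i$ and $4(Cr+C)<m_{f(i)}$. Then \cref{detect} identifies $A_{1,r}(X_i)\cong N_i$ and $A_{1,Cr+C}(Y_{f(i)})\cong M_{f(i)}$, while \cref{invar} provides a surjection $A_{1,r}(X_i)\twoheadrightarrow A_{1,Cr+C}(Y_{f(i)})$; composing with these identifications gives a surjection $N_i\twoheadrightarrow M_{f(i)}$. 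The symmetric argument applied to $\psi$ (with a possibly different fixed scale) gives a surjection $M_{f(i)}\twoheadrightarrow N_i$. The composite $N_i\twoheadrightarrow M_{f(i)}\twoheadrightarrow N_i$ is then a surjective endomorphism of $N_i$, which is finitely generated and residually finite (being a finite-index subgroup of the residually finite group $G$) and hence Hopfian; therefore both surjections are isomorphisms and $N_i\cong M_{f(i)}$ for every $i$ in the domain of $f$.

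I expect the main obstacle to be the second step: ensuring that the coarse equivalence restricts to quasi-isometries of the matched components with a single constant $C$ independent of $i$, since \cref{invar} requires a genuine quasi-isometry constant and the entire scale bookkeeping in the last step relies on $C$ being uniform. The Hopficity input is the device that converts the two one-sided surjections into the desired isomorphism, and must be invoked precisely because finite-index subgroups of an infinite group need not be co-Hopfian.
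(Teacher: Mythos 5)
Your proposal is correct and follows essentially the same route as the paper: the paper simply cites Lemma~1 of \cite{AA} for the component matching and the uniform quasi-isometry constant $C$ (steps you re-derive by hand), and then, exactly as you do, combines \cref{detect} and \cref{invar} to get surjections $N_i\twoheadrightarrow M_{f(i)}$ and $M_{f(i)}\twoheadrightarrow N_i$, concluding via Hopficity of the finitely generated residually finite group $N_i$.
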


\begin{proof}
We first note that $H$ is also finitely presented: the box spaces being coarsely equivalent implies by the main theorem of \cite{AA} that the groups must be quasi-isometric, and finite presentability is invariant under quasi-isometry.

As $\Box_{(N_i)}G\simeq_{\text{CE}} \Box_{(M_i)}H$, we know due to Lemma 1 of \cite{AA} that there exists an almost permutation of $\N$ with bounded displacement and a constant $C$ such that $G/N_i$ is quasi-isometric, with constant $C$, to $H/M_{f(i)}$ for every $i$ in the domain of $f$.

Now take finite presentations of $G=\langle S,R\rangle$ and $H=\langle S',R'\rangle$, where $S$ and $S'$ are the generating sets used for the construction of the box spaces. Also take $k=\max\left(\{|g|: g\in R\cup R'\}\cup \{2C\}\right)$ and take $I$ such that $\inf\{|g|: g\in N_i\setminus\{e\}\}>Ck+C$ and $\inf\{|g|:g\in M_{f(i)}\setminus\{e\}\}>Ck+C$ for every $i\ge I$. Such an $I$ exists, because $N_i$ and $M_i$ are filtrations and $f$ is an almost permutation of $\N$ with bounded displacement.

By combining \cref{detect} and \cref{invar} we find for any $i\ge I$ that $$N_i\cong A_{1,k} \left(\Cay(G/N_i)\right) \twoheadrightarrow A_{1,Ck+C} \left(\Cay(H/M_{f(i)})\right) \cong M_{f(i)}.$$
Similarly we find for any $i\ge I$ that $$M_{f(i)}\cong A_{1,k} \left(\Cay(H/M_{f(i)})\right) \twoheadrightarrow A_{1,Ck+C} \left(\Cay(G/N_i)\right) \cong N_i.$$
Combining these maps provides a surjective homomorphism $N_i\twoheadrightarrow N_i$. Now $N_i\lhd G$ is residually finite, therefore it is Hopfian. This means that this surjective homomorphism is also injective. Therefore $N_i\cong M_{f(i)}$, when $i\ge I$. Now we can remove all $i< I$ from the domain of $f$. Then $f$ is still an almost permutation of $\N$ with bounded displacement and $N_i\cong M_{f(i)}$ for every $i$ in the domain of $f$. 
\end{proof}

\section{Applications}
In this section we look at some applications of \cref{main}.
\subsection{Bounded covers}
A first application of \cref{main}, is that there exist two box spaces $\Box_{(N_i)}G\not\simeq_{\text{CE}}\Box_{(M_i)}G$ of the same group $G$ such that $G/N_i\twoheadrightarrow G/M_i$ with $[M_i:N_i]$ bounded. This is surprising, because for two groups $G\twoheadrightarrow H$ with finite kernel, we have that $G$ and $H$ are quasi-isometric.

An example of such box spaces can be found for the free group. Consider $F_2$ as an index 12 subgroup of $\SL{2}{\Z}$, where the generating set is taken to be $$\left\{\begin{bmatrix}1&2\\0&1\end{bmatrix}, \begin{bmatrix}1&0\\2&1\end{bmatrix} \right\}.$$ 
Then take $N_i$ to be the kernel of $F_2\to \SL{2}{\Z/4^i\Z}$ and take $M_i$ to be the kernel of  $F_2\to \SL{2}{\Z/2^{2i+1}\Z}$. 
For $i\geq 2$, we have that the kernel of $\SL{2}{\Z} \to \SL{2}{\Z/2^i\Z}$ is contained in $F_2$.
Thus for $i\ge 1$ we have $[F_2:N_i]=2^{6i-4}$, while $[F_2:M_i] = 2^{6i-1}$. As they are subgroups of $F_2$, $N_i$ and $M_i$ are free groups for every $i$, and we can calculate the rank for any $N\lhd F_2$ using the Nielsen-Schreier rank formula, $\rk{N} = [F_2:N] + 1$.

If we assume that $\Box_{(N_i)}F_2\simeq_{\text{CE}}\Box_{(M_i)}F_2$, then due to \cref{main} there exist some $i$ and $j$ in $\N$ such that $N_i\cong M_j$. So $2^{6i-4}+1 = \rk{N_i} =\rk{M_j} =2^{6j-1} + 1$. So $6i-4=6j-1$ which is impossible if $i$ and $j$ are both in $\N$.

We have thus proved the following.
\begin{theorem}[\cref{bounded} in Introduction]
There exist two box spaces $\Box_{(N_i)}G\not\simeq_{\text{CE}}\Box_{(M_i)}G$ of the same group $G$ such that $G/N_i\twoheadrightarrow G/M_i$ with $[M_i:N_i]$ bounded.
\end{theorem}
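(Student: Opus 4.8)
The plan is to exhibit an explicit group together with a pair of filtrations, and then invoke \cref{main} to rule out coarse equivalence. Since \cref{main} reduces the question to whether any $N_i$ can be isomorphic to any $M_j$, and isomorphism of free groups is governed entirely by rank, I would look for a group whose relevant subgroups are all free, so that the Nielsen--Schreier formula $\rk{N}=[F_2:N]+1$ converts the problem into a purely arithmetic comparison of indices. The whole difficulty is then engineering two filtrations that are genuinely ``close'' (bounded index between corresponding terms) while their terms take ranks lying in two disjoint sets.

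Concretely, I would take $G=F_2$, realised as the index-$12$ subgroup of $\SL{2}{\Z}$ generated by the two parabolic matrices above, and build both filtrations from principal congruence subgroups at powers of $2$. Setting $N_i=\ker(F_2\to\SL{2}{\Z/2^{2i+1}\Z})$ and $M_i=\ker(F_2\to\SL{2}{\Z/4^i\Z})$, the divisibility $2^{2i}\mid 2^{2i+1}$ gives the reduction map $\SL{2}{\Z/2^{2i+1}\Z}\twoheadrightarrow\SL{2}{\Z/2^{2i}\Z}$, hence $N_i\leq M_i$ and a surjection $G/N_i\twoheadrightarrow G/M_i$. (The roles of the two filtrations are symmetric, so the opposite convention just exchanges $N_i$ and $M_i$.)

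Next I would compute the indices. Using $|\SL{2}{\Z/2^k\Z}|=3\cdot 2^{3k-2}$ together with the fact that the image of $F_2$ retains index $12$ in each finite quotient, I obtain $[F_2:M_i]=2^{6i-4}$ and $[F_2:N_i]=2^{6i-1}$, so $[M_i:N_i]=2^{3}=8$ is uniformly bounded, as required. Nielsen--Schreier then yields $\rk{M_i}=2^{6i-4}+1$ and $\rk{N_i}=2^{6i-1}+1$. If the box spaces were coarsely equivalent, \cref{main} would force $N_i\cong M_j$ for some $i,j\in\N$, i.e.\ $2^{6i-1}+1=2^{6j-4}+1$; but $6i-1=6j-4$ has no solution in integers, since the two sides are incongruent modulo $6$. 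Hence no such coarse equivalence can exist, proving the theorem.

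The main obstacle is not the final arithmetic, which is immediate once the ranks are in hand, but the two congruence-theoretic inputs feeding it: verifying that for all sufficiently large $i$ the principal congruence subgroup $\ker(\SL{2}{\Z}\to\SL{2}{\Z/2^k\Z})$ lands inside $F_2$ (so that the $N_i$ and $M_i$ are genuinely congruence filtrations of the free group), and pinning down the order of the image of $F_2$ in each $\SL{2}{\Z/2^k\Z}$ so that the index formulae $2^{6i-4}$ and $2^{6i-1}$ are exact. One must also confirm that $(N_i)$ and $(M_i)$ are honest filtrations, nested with trivial intersection, which follows because the congruence subgroups are nested and separate the identity.
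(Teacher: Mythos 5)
Your proposal is correct and follows essentially the same route as the paper: the Sanov copy of $F_2$ inside $\SL{2}{\Z}$, the two congruence filtrations at moduli $2^{2i}$ and $2^{2i+1}$, the index computation giving $[F_2:\ker]=2^{3k-4}$, Nielsen--Schreier, and the parity-type obstruction $6i-1\neq 6j-4$ via \cref{main}. The only difference is that you swap the labels of the two filtrations so that the surjection goes in the direction stated in the theorem, which is purely cosmetic.
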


\subsection{Rigidity of box spaces}
Another consequence of \cref{main} is that for finitely presented groups we can strengthen Theorem 7 of \cite{AA}, which states that two box spaces that are coarsely equivalent stem from quasi-isometric groups.
\begin{theorem}[\cref{ComInt} in Introduction]\label{Com}
Let $G$ and $H$ be finitely presented groups, then there exist two filtrations $N_i$ and $M_i$ of $G$ and $H$ respectively such that $\Box_{(N_i)} G$ and $\Box_{(M_i)} H$ are coarsely equivalent if and only if $G$ is commensurable to $H$ via a normal subgroup.
\end{theorem}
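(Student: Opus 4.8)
The statement is a biconditional whose two directions are of very different character: the forward implication is essentially a restatement of \cref{main}, whereas the reverse implication requires an explicit construction of filtrations. Throughout we may assume $G$ and $H$ are residually finite, since this is needed for either side to be non-vacuous (box spaces only exist for residually finite groups), and since residual finiteness is a commensurability invariant.

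For the forward direction, suppose $\Box_{(N_i)}G\simeq_{\text{CE}}\Box_{(M_i)}H$ for some filtrations. By \cref{main} there is an almost permutation $f$ of $\N$ with bounded displacement such that $N_i\cong M_{f(i)}$ for every $i$ in the (cofinite, hence nonempty) domain of $f$. Fixing any such $i$, the group $N_i$ is a finite index normal subgroup of $G$, the group $M_{f(i)}$ is a finite index normal subgroup of $H$, and $N_i\cong M_{f(i)}$. This is precisely a witness that $G$ and $H$ are commensurable via a normal subgroup, so this direction is immediate.

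For the reverse direction, suppose $G$ and $H$ are commensurable via a normal subgroup: there is a group $K$ with isomorphic copies $K\lhd G$ and $K\lhd H$, each of finite index. The plan is to produce a single filtration of $K$ that is simultaneously a filtration of $G$ and of $H$. Since $K$ is finitely generated and residually finite, I would take $K_i$ to be the intersection of all subgroups of $K$ of index at most $i$; this is a finite index \emph{characteristic} subgroup of $K$, the $K_i$ are nested, and $\bigcap_i K_i=\{e\}$. As $K_i$ is characteristic in $K$ and $K$ is normal in $G$ (respectively $H$), each $K_i$ is normal in $G$ (respectively $H$), so setting $N_i=K_i\le G$ and $M_i=K_i\le H$ gives filtrations of $G$ and of $H$. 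To establish $\Box_{(N_i)}G\simeq_{\text{CE}}\Box_{(M_i)}H$, the key observation is that $K/K_i$ is a common subgroup of both $G/K_i$ and $H/K_i$, of index $[G:K]$ and $[H:K]$ respectively, and these indices are independent of $i$. Fixing coset representatives of bounded length and a single finite generating set $T\subseteq K$ of elements of small $S$-length, the inclusions $(K/K_i,d_T)\hookrightarrow G/K_i$ and $(K/K_i,d_T)\hookrightarrow H/K_i$ are quasi-isometries whose constants and codensity depend only on $[G:K]$, $[H:K]$ and the chosen representatives, hence are uniform in $i$. Composing yields a quasi-isometry $G/K_i\to H/K_i$ with a constant $C$ independent of $i$ and comparable diameters; feeding this uniform, index-preserving ($f=\mathrm{id}$) family into the converse direction of Lemma 1 of \cite{AA} assembles the desired coarse equivalence of box spaces.

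The main obstacle is the uniformity in this reverse direction: one must verify that the finite-index quasi-isometry constants relating $K/K_i$ to the two ambient quotients, together with the ratio of diameters of $G/K_i$ and $H/K_i$, remain bounded as $i\to\infty$, since otherwise the assembled map fails to be a coarse equivalence. This is guaranteed precisely by working with the \emph{fixed} generating set $T$ of $K$ and fixed coset representatives, so that the Schreier-type comparison of metrics is carried out once and for all rather than anew for each $i$.
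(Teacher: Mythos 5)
Your proof is correct and takes essentially the same route as the paper: the forward direction is read off directly from \cref{main}, and the converse is handled by equipping the common finite-index normal subgroup $K$ with a filtration by characteristic subgroups, which is then simultaneously a filtration of $G$ and of $H$. The only differences are cosmetic --- the paper obtains the characteristic filtration by intersecting the automorphic images of an arbitrary filtration of $K$ rather than intersecting all subgroups of index at most $i$, and it simply asserts that the resulting box spaces are coarsely equivalent, whereas you usefully spell out the uniform quasi-isometry argument behind that assertion.
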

Note that two groups are commensurable via a normal subgroup if there exist finite index normal subgroups of each of the groups such that these subgroups are isomorphic.

One of the implications of this theorem follows immediately from \cref{main}. For the other direction, we first recall the following well-known result.
\begin{proposition}\label{ComNorm}
If $G$ is residually finite, then it has a filtration consisting of characteristic subgroups.
\end{proposition}
\begin{proof}
As $G$ is residually finite, it has a filtration $N_i$. Now take $M_i = \cap_{\varphi}\varphi(N_i)$, where the intersection is taken over all automorphisms $\varphi$ in $\operatorname{Aut}(G)$.
The subgroups $M_i$ are characteristic, since for any $\psi\in \operatorname{Aut}(G)$, $\psi(M_i) = \psi\left(\cap_{\varphi}\varphi(N_i)\right) = \cap_{\varphi}(\psi\varphi)(N_i)=M_i$.

Now $M_i$ is a filtration of $G$: $M_i$ is of finite index in $G$ for every $i$ since there are finitely many possibilities for $\varphi(N_i)$ as $G$ is finitely generated, and thus $M_i$ is the intersection of finitely many finite index subgroups.
\end{proof}

We can use this proposition to prove \cref{Com}.

\begin{proof}[Proof of \cref{Com}]
If $\Box_{(N_i)} G$ is coarsely equivalent to $\Box_{(M_i)} H$, then due to \cref{main} we have that for a large enough $i$ there exists a $j$ such that $N_i$ is isomorphic to $M_j$. As both $N_i$ and $M_j$ are normal subgroups, this proves one direction of the theorem.

Conversely, if $G$ and $H$ are commensurable via normal subgroups, they both have finite index normal subgroups that are isomorphic to one another. In fact we may say that they have a common finite index normal subgroup $K$.
As $G$ is residually finite, we have that $K$ is also residually finite. Now due to \cref{ComNorm} we can take a filtration $N_i$ of $K$ consisting of characteristic subgroups. Now as $K$ is a finite index normal subgroup of $G$ and $H$, we have that $N_i$ is a filtration of $G$ and of $H$, and the corresponding box spaces are coarsely equivalent.
\end{proof}

We remark that this result is in some sense optimal, since by Proposition 10 of \cite{AA}, there exist non-commensurable groups which admit isometric box spaces. The groups used are the wreath products $\mathbb{Z}/4 \mathbb{Z} \wr \mathbb{Z}$ and $(\mathbb{Z}/2 \mathbb{Z} \times \mathbb{Z}/2 \mathbb{Z}) \wr \mathbb{Z}$, which are ``easy'' examples of groups which are not fnitely presented. 

This result also allows us to answer the question posed at the end of \cite{Das}, which asks whether residually finite groups that are uniformly measure equivalent necessarily admit coarsely equivalent box spaces. The answer can be seen to be negative, by taking two residually finite, finitely presented, uniformly measure equivalent groups which are not commensurable (for example, cocompact lattices in $SL(2,\mathbb{C})$ are a source of such examples). 

\subsection{Rank gradient and first $\ell^2$ Betti number}
From \cref{main} we can deduce that non-zero rank gradient of the group is a coarse invariant for box spaces of finitely presented groups. 

For a finitely generated, residually finite group $G$, the rank gradient $\RG{G}{(N_i)}$ of $G$ with respect to a filtration $(N_i)$ is defined by
$$\RG{G}{(N_i)}:= \lim_{i\rightarrow \infty} \frac{\rk{N_i}-1}{[G:N_i]},$$
where $\rk{N}$ denotes the rank of $N$, i.e. the minimal cardinality of a generating set. This notion was first introduced by Lackenby in \cite{Lac}, and is connected in interesting ways to analytic properties of $G$, see for example \cite{AJN}.

\begin{theorem}[\cref{RGInt} in Introduction]
Given two finitely presented, residually finite groups $G$ and $H$ and respective filtrations $(N_i)$ and $(M_i)$, if $\Box_{(N_i)}G$ is coarsely equivalent to $\Box_{(M_i)}H$ then $\RG{G}{(N_i)}>0$ if and only if $\RG{H}{(M_i)}>0$. 
\end{theorem}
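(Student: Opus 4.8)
The plan is to use \cref{main} to transport the rank gradient computation from one box space to the other, exploiting that coarse equivalence forces the filtrations to agree up to an almost permutation with bounded displacement under which the normal subgroups are isomorphic. The key observation is that all three quantities appearing in the definition of rank gradient---the rank $\rk{N_i}$, the index $[G:N_i]$, and hence the ratio $(\rk{N_i}-1)/[G:N_i]$---should be expressible in terms that are preserved or controlled under the correspondence $N_i \cong M_{f(i)}$ supplied by \cref{main}.

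First I would apply \cref{main} to obtain an almost permutation $f$ of $\N$ with bounded displacement such that $N_i \cong M_{f(i)}$ as \emph{abstract groups} for every $i$ in the domain of $f$. Since the rank of a group depends only on its isomorphism class, this immediately gives $\rk{N_i} = \rk{M_{f(i)}}$. The remaining ingredient is the index: I would recall that \cite{AA} (Lemma 1) also yields a constant $C$ and the fact that $G/N_i$ is quasi-isometric with constant $C$ to $H/M_{f(i)}$, so in particular $|G/N_i|$ and $|H/M_{f(i)}|$ have comparable cardinality. Indeed, two finite Cayley graphs that are quasi-isometric with a fixed constant $C$ must have numbers of vertices differing by a bounded multiplicative factor (a $C$-quasi-isometry between finite spaces is a bijection-up-to-bounded-multiplicity onto a $C$-dense image), so there exists a constant $D \geq 1$, depending only on $C$, with $D^{-1}[G:N_i] \leq [H:M_{f(i)}] \leq D[G:N_i]$ for all $i$ in the domain.

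With these two facts in hand, the conclusion is essentially a squeeze argument on the limits. Writing $a_i = (\rk{N_i}-1)/[G:N_i]$ and $b_j = (\rk{M_j}-1)/[H:M_j]$, we have for $i$ in the domain of $f$ that
\begin{equation*}
a_i = \frac{\rk{N_i}-1}{[G:N_i]} = \frac{\rk{M_{f(i)}}-1}{[G:N_i]},
\end{equation*}
and substituting the index comparison gives $D^{-1} b_{f(i)} \leq a_i \leq D\, b_{f(i)}$. Since $f$ is an almost permutation with bounded displacement, the sequence $(b_{f(i)})_i$ has the same $\liminf$ and $\limsup$ (in particular the same limit, when it exists) as $(b_j)_j$; removing the finitely many indices outside the domain of $f$ does not affect the limit. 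Therefore $\RG{G}{(N_i)}$ and $\RG{H}{(M_i)}$ are each positive precisely when the other is, since the bounded multiplicative factor $D$ can neither create nor destroy a strictly positive limit.

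**The main obstacle** I anticipate is the index comparison step: one must argue carefully that a $C$-quasi-isometry between two \emph{finite} Cayley graphs controls their vertex counts up to a multiplicative constant. The subtlety is that the definition of quasi-isometry given in the excerpt allows the map to be non-injective and non-surjective, so one has to combine the coarse density of the image (each point of the target is within $C$ of the image) with the bounded-to-one nature of a $C$-Lipschitz-type map on a bounded-geometry graph; this yields the two-sided multiplicative bound on cardinalities. Everything else---the isomorphism of subgroups, the invariance of rank under isomorphism, and the stability of the limit under an almost permutation with bounded displacement---follows formally from \cref{main} and \cite{AA}.
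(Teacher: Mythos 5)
Your proposal is correct and follows essentially the same route as the paper: apply \cref{main} together with Lemma 1 of \cite{AA} to get $\rk{N_i}=\rk{M_{f(i)}}$ and a uniform multiplicative comparison of the indices $[G:N_i]$ and $[H:M_{f(i)}]$ coming from the $C$-quasi-isometry of the finite quotients, then conclude that the two rank gradients are comparable up to a bounded factor. The only cosmetic difference is that the paper first notes that $\operatorname{rk}-1$ is submultiplicative, so the defining limit is a monotone infimum and the reindexing by $f$ is immediate, whereas you handle the reindexing via preservation of $\liminf$/$\limsup$ under an almost permutation; both are valid.
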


\begin{proof}
We begin with the remark that `` $\operatorname{rk}-1$ '' is submultiplicative, that is, if $\Lambda<\Gamma$, then
$$\rk{\Lambda}-1\leq (\rk{\Gamma}-1)[\Gamma:\Lambda].$$
This inequality can be deduced from the Nielsen-Schreier formula applied to the free group $F_{\rk{G}}$ on $\rk{G}$ letters, and the subgroup $K< F_{\rk{G}}$ such that $H$ is the image of $K$ under the surjective homomorphism $F_{\rk{G}}\rightarrow G$. Note that $H$ may be generated by fewer elements than $\rk{K}$. 
Thus the limit in the definition is an infimum, and for all $i$,
$$\frac{\rk{N_i}-1}{[G:N_i]}\leq \frac{\rk{N_{i-1}}-1}{[G:N_{i-1}]} \text{ and } \frac{\rk{M_i}-1}{[H:M_i]}\leq \frac{\rk{M_{i-1}}-1}{[H:M_{i-1}]}.$$
By \cref{main} and Lemma 1 of \cite{AA}, there is an almost permutation $f: \N \rightarrow \N$ with bounded displacement such that $N_i\cong M_{f(i)}$ for every $i$ in the domain of $f$, and such that there is a constant $C>0$ with $G/N_i$ and $H/M_{f(i)}$ being $C$-quasi-isometric for all $i$ in the domain of $f$. 
Thus we have
$$\inf_{i} \frac{\rk{N_i}-1}{[G:N_i]} = \inf_i \frac{\rk{M_{f(i)}}-1}{[G:N_i]}\geq \inf_i \frac{\rk{M_i}-1}{|B_G(C^2)|\cdot[H:M_i]},$$
where $|B_G(C^2)|$ denotes the cardinality of a ball of radius $C^2$ in $G$. So we have that $\RG{H}{(M_i)}>0$ implies that $\RG{G}{(N_i)}>0$, and the theorem follows.
\end{proof}

As another corollary of \cref{main}, we have that having the first $\ell^2$ Betti number equal to zero is a coarse invariant of box spaces of finitely presented groups. For the definition and properties of the first $\ell^2$ Betti number, we refer the reader to \cite{Lu02}. By the L\"{u}ck Apprroximation Theorem (\cite{Lu94}), we have that for a finitely presented group $G$, the first $\ell^2$ Betti number $\beta_1^{(2)}(G)$ can be approximated using a filtration $(N_i)$ of $G$ as follows:
$$\beta_1^{(2)}(G) = \lim_{i\rightarrow \infty} \frac{b_1(N_i)}{[G:N_i]},$$
where $b_1(N)$ denotes the classical first Betti number of $N$. 

Thus, by the same argument as above, we have 
$$\lim_{i\rightarrow \infty} \frac{b_1(N_i)}{[G:N_i]} = \lim_{i\rightarrow \infty} \frac{b_1(M_{f(i)})}{[G:N_i]} \geq \lim_{i\rightarrow \infty} \frac{b_1(M_{i})}{|B_G(C^2)|\cdot[H:M_i]},$$
under the same hypotheses as above, and so we recover the following theorem.

\begin{theorem}[\cref{l2Int} in Introduction]
Given two finitely presented, residually finite groups $G$ and $H$ and respective filtrations $(N_i)$ and $(M_i)$, if $\Box_{(N_i)}G$ is coarsely equivalent to $\Box_{(M_i)}H$ then $\beta_1^{(2)}(G)>0$ if and only if $\beta_1^{(2)}(H)>0$.
\end{theorem}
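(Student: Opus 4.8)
The plan is to transport, essentially verbatim, the argument just given for rank gradient, replacing the submultiplicativity of $\operatorname{rk}-1$ by two facts: that the classical first Betti number is an isomorphism invariant, and that the L\"uck Approximation Theorem expresses $\beta_1^{(2)}$ as a limit along any filtration. First I would apply \cref{main} together with Lemma 1 of \cite{AA} to produce an almost permutation $f$ of $\N$ with bounded displacement and a constant $C>0$ such that, for every $i$ in the domain of $f$, one has $N_i\cong M_{f(i)}$ and $G/N_i$ is $C$-quasi-isometric to $H/M_{f(i)}$. Since $b_1$ depends only on the isomorphism type of a group, the isomorphisms $N_i\cong M_{f(i)}$ give $b_1(N_i)=b_1(M_{f(i)})$ for all such $i$.

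Next I would compare the two indices geometrically. A $C$-quasi-isometry $\phi\colon G/N_i\to H/M_{f(i)}$ satisfies $\tfrac1C d(x,y)-C\le d(\phi(x),\phi(y))$, so any two vertices with the same image lie at distance at most $C^2$; hence every fibre of $\phi$ contains at most $|B_G(C^2)|$ points and
\[
[G:N_i]=|G/N_i|\le |B_G(C^2)|\cdot|H/M_{f(i)}|=|B_G(C^2)|\cdot[H:M_{f(i)}].
\]
Feeding this into the L\"uck formula, and using that the limit exists and that $f(i)\to\infty$ (so reindexing by the bounded-displacement almost permutation $f$ leaves the limit unchanged), I would obtain
\[
\beta_1^{(2)}(G)=\lim_{i}\frac{b_1(N_i)}{[G:N_i]}=\lim_i\frac{b_1(M_{f(i)})}{[G:N_i]}\ge \frac{1}{|B_G(C^2)|}\lim_i\frac{b_1(M_{f(i)})}{[H:M_{f(i)}]}=\frac{\beta_1^{(2)}(H)}{|B_G(C^2)|}.
\]
Thus $\beta_1^{(2)}(H)>0$ forces $\beta_1^{(2)}(G)>0$, and exchanging the roles of $G$ and $H$ (replacing $f$ by $f^{-1}$ and $\phi$ by its quasi-inverse) yields the reverse implication, giving the claimed equivalence.

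The only genuinely geometric step is the index comparison, and this is exactly the estimate already established in the rank gradient proof, so I do not expect it to be the main obstacle. The point that needs a little care is the passage to limits: unlike the rank gradient, where the defining quantity is monotone and the limit is an infimum, here $\beta_1^{(2)}$ is recovered as an honest limit, so I must check that reindexing the sequence $b_1(M_j)/[H:M_j]$ along the almost permutation $f$ does not alter its limit. This is immediate from bounded displacement, which guarantees $f(i)\to\infty$ and that $f$ omits only finitely many indices, so the subsequence and the full sequence share the same limit $\beta_1^{(2)}(H)$.
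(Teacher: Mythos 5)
Your proposal is correct and follows essentially the same route as the paper: apply \cref{main} together with Lemma 1 of \cite{AA} to get $b_1(N_i)=b_1(M_{f(i)})$ and the index comparison $[G:N_i]\le |B_G(C^2)|\cdot[H:M_{f(i)}]$, then feed this into the L\"uck approximation formula. Your extra care about why reindexing along the bounded-displacement almost permutation preserves the limit is a point the paper glosses over with ``by the same argument as above,'' but it matches the intended argument exactly.
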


Note that a more general result is Corollary 1.3 of \cite{Das}: if $\Box_{(N_i)}G$ is coarsely equivalent to $\Box_{(M_i)}H$, even without the assumption of finite presentation, then $G$ and $H$ are uniformly measure equivalent and thus have proportional $\ell^2$ Betti numbers. 

\subsection{Box spaces of free groups}

The main theorem also allows us to easily distinguish box spaces of the free group.

\begin{theorem}[\cref{HilbInt} in Introduction]
For each $n\geq 2$, there exist infinitely many coarse equivalence classes of box spaces of the free group $F_n$ that coarsely embed into a Hilbert space.
\end{theorem}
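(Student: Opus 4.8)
The plan is to exhibit an infinite family of box spaces of $F_n$ that are pairwise non-coarsely-equivalent but each of which coarsely embeds into Hilbert space. The strategy is to produce a single filtration whose terms have a property that is both a coarse invariant (via \cref{main}) and easy to vary, and whose associated box space is known to embed into Hilbert space. The natural coarse invariant to exploit is the one already used in the bounded-covers example of \cref{bounded}: since subgroups of $F_n$ are free, \cref{main} says that coarsely equivalent box spaces of $F_n$ must, for large indices, use filtration terms $N_i$ and $M_j$ that are \emph{isomorphic}, hence free of the same rank, hence of the same index by the Nielsen--Schreier formula $\rk{N} = (n-1)[F_n:N]+1$. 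So two box spaces of $F_n$ whose filtration terms have, say, incomparable sets of indices cannot be coarsely equivalent.

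First I would fix a filtration $(N_i)$ of $F_n$ whose box space is known to coarsely embed into Hilbert space --- for instance a filtration by terms of a descending chain with amenable (or more specifically abelian/nilpotent) quotients, such as the derived or lower central series intersected with a cofinal chain, since box spaces with such quotients are known to have property A or to embed coarsely into Hilbert space. Given one such filtration with box space $\Box_{(N_i)}F_n$, I would then pass to infinitely many subfiltrations. Concretely, for each infinite subset $A\subseteq\N$ I obtain a box space $\Box_{(N_i)_{i\in A}}F_n$ which is a subspace of the original (so it inherits the coarse embedding into Hilbert space), and the set of ranks appearing in it is $\{\rk{N_i}:i\in A\}$.

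The key step is then a counting argument: I would choose an infinite collection $\{A_m\}_{m\in\N}$ of infinite subsets of $\N$ whose rank-sets $\{\rk{N_i}:i\in A_m\}$ are pairwise ``incompatible'' in the sense that no almost-permutation with bounded displacement can match up the two sequences of isomorphism types. Because each $N_i$ is free of a definite rank, matching $N_i\cong M_{f(i)}$ forces equality of ranks, hence equality of indices; arranging the $A_m$ so that the sequences of indices grow at incompatible rates (for example by taking them to be disjoint or to interleave with gaps growing faster than any fixed bounded displacement can bridge) rules out the existence of such an $f$, and \cref{main} then gives non-coarse-equivalence. I would package this as: each $\Box_{(N_i)_{i\in A_m}}F_n$ coarsely embeds into Hilbert space (inherited from the master filtration), yet they fall into infinitely many coarse equivalence classes by \cref{main} together with the rank formula.

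The main obstacle I anticipate is ensuring the master filtration genuinely yields a box space that coarsely embeds into Hilbert space \emph{and} remains a valid filtration after restriction to each $A_m$ (nestedness is preserved under passing to subsequences, and trivial intersection is automatic, so the filtration axioms are safe; the real content is the Hilbert-space embedding). The cleanest route is to start from a known source of Hilbert-space-embeddable box spaces of free groups --- box spaces arising from quotients that are, say, finite nilpotent of bounded class, or more simply a filtration realizing an a-T-menable or property-A box space --- and then verify that the coarse embedding restricts to each subspace, which is immediate since a subspace of a Hilbert-space-embeddable space embeds by restriction. The delicate bookkeeping is purely combinatorial: choosing the index sets $A_m$ so that the induced rank sequences defeat every bounded-displacement almost-permutation simultaneously, which I would handle by a diagonal or gap-growth construction as in the proof of \cref{bounded}.
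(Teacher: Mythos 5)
Your proposal is correct in substance but takes a genuinely different route from the paper. The paper fixes, for each $m\geq 2$, the $m$-homology filtration $N_{i+1}=N_i^m[N_i,N_i]$ of $F_n$ (each of whose box spaces embeds into Hilbert space by the main result of \cite{Khu}), computes $\rk{N_i}=m^{\sum_j\rk{N_j}}(n-1)+1$, and uses coprimality of the various $m$ to show that the filtration terms of different homology filtrations are eventually pairwise non-isomorphic, so \cref{main} separates the box spaces. You instead fix one embeddable master filtration and pass to subfiltrations $(N_i)_{i\in A_m}$; this is legitimate (a subsequence of a nested chain with trivial intersection is again a filtration, and since the distance between two components of a box space depends only on those two components, the sub-box-space is isometric to a subspace and inherits the coarse embedding). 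Your combinatorial step is in fact simpler than you suggest: in a strictly nested chain of finite-index subgroups of $F_n$, distinct terms have distinct indices and hence, by Nielsen--Schreier, distinct ranks, so $N_{a}\cong N_{b}$ forces $a=b$; an almost permutation as in \cref{main} therefore forces the index sets $A_m$ to agree up to finite sets, and bounded displacement is not even needed --- pairwise disjoint infinite subsets of $\N$ already give infinitely many classes, with no diagonal or gap-growth argument required. The one point you leave vague is the existence of the master filtration: the derived and lower central series of $F_n$ are not of finite index, and ``finite nilpotent quotients of bounded class'' is not a standard sufficient criterion; the correct input is exactly the homology filtration above together with \cite{Khu}. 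Granting that input, your argument closes. What the paper's approach buys is that the resulting box spaces are distinguished by genuinely different quotient structures (different primes $m$) rather than by thinning out one sequence; what yours buys is a cleaner separation argument that needs only one embeddability result and no rank arithmetic beyond injectivity of index along a nested chain.
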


\begin{proof}
Given $m\geq 2$, consider the $m$-homology filtration $(N_i)$, defined inductively by $N_1:= F_n$, $N_{i+1}= N_i^m [N_i,N_i]$ (that is, $N_{i+1}$ is generated by commutators and $m$th powers of elements of $N_i$). 
Since for each $j$, $N_j/N_{j+1}\cong \mathbb{Z}_m^{\rk{N_j}}$, using the Nielsen-Schreier rank formula we can deduce that
$$\rk{N_i}= m^{\sum{\rk{N_j}}}(n-1)+1,$$
where the sum in the exponent runs from $j=1$ to $j=i-1$. 
By consideration of these ranks for coprime $m$ and $k$, we see that the corresponding $m$-homology and $k$-homology filtrations $(N_i)$ and $(M_i)$ will satisfy $N_i \not\cong M_j$ for all $i,j$ sufficiently large.
Thus, considering $m$-homology filtrations for various pairwise coprime $m$ gives rise to box spaces of $F_n$ which are not coarsely equivalent by \cref{main}.
Such box spaces are coarsely embeddable into a Hilbert space by the main result of \cite{Khu}.
\end{proof}

We now prove \cref{InfRam} from the introduction. Note that since being a Ramanujan expander is not preserved by coarse equivalences, the relevant question becomes how many coarse equivalence classes of box spaces there are such that each equivalence class contains at least one box space which is a Ramanujan expander.

\begin{theorem}[\cref{InfRam} in Introduction]\label{RamThm}
There exist infinitely many coarse equivalence classes of box spaces of the free group $F_3$ that contain Ramanujan expanders.
\end{theorem}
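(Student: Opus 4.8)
<br>

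The goal is to produce infinitely many coarse equivalence classes of box spaces of $F_3$, each containing a Ramanujan expander. The plan is to exploit the main theorem by constructing a family of filtrations whose terms are pairwise non-isomorphic as groups (from some index onwards), while ensuring that each filtration yields a Ramanujan expander. The natural source of Ramanujan expanders via Cayley graphs is the family of graphs arising from $\SL{2}{\Z/p\Z}$ (or $\operatorname{PSL}(2,\Z/p\Z)$) with respect to suitable generators, which are Ramanujan by the work on the Ramanujan conjecture (Lubotzky--Phillips--Sarnak, Margulis). So first I would realise $F_3$ as a finite-index subgroup of a group surjecting onto a family $\SL{2}{\Z/p\Z}$, and use the congruence subgroups to define filtrations.

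Concretely, I would fix a realisation of $F_3$ as a finite-index subgroup of a suitable arithmetic group $\Gamma$ (for instance a level structure in $\SL{2}{\Z}$, as in the example preceding \cref{bounded}), so that for a prime $p$ the congruence kernel $\ker(\Gamma \to \SL{2}{\Z/p\Z})$ intersected with $F_3$ gives a finite-index normal subgroup of $F_3$, and the associated quotient Cayley graph is (up to bounded data) a Ramanujan expander. Then for a fixed prime $p$, taking the filtration given by kernels of reduction mod $p^i$ (or mod $p_i$ for an increasing sequence of primes from a single residue class), one obtains a box space of $F_3$ containing at least one Ramanujan expander component. The key point is that by the Nielsen--Schreier formula the rank of each $N_i$ is determined by its index $[F_3:N_i]=1+2[F_3:N_i]$, and the index is a computable function of $p$ (a polynomial in $p$ times $|\SL{2}{\Z/p\Z}|$-type factors). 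Varying over infinitely many primes $p$ in an appropriate set, the indices $[F_3:N_i^{(p)}]$ take distinct values, so the ranks differ, and hence the groups $N_i^{(p)}$ are pairwise non-isomorphic for large $i$.

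Having arranged pairwise non-isomorphic filtration terms, I would invoke \cref{main}: if two of these box spaces were coarsely equivalent, there would be an almost permutation $f$ with $N_i \cong M_{f(i)}$ for all large $i$, forcing equality of ranks and hence equality of indices, contradicting the distinctness of the index values for different primes $p$. This gives infinitely many coarse equivalence classes, each containing a Ramanujan expander, which is exactly the claim.

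The main obstacle, and the step requiring the most care, is simultaneously guaranteeing (i) that each chosen filtration really does produce a \emph{Ramanujan} expander and not merely an expander, and (ii) that the resulting subgroups of $F_3$ have \emph{distinct} ranks across the infinitely many chosen primes. Point (i) relies on the deep number-theoretic input (Ramanujan--Petersson) applied to $\SL{2}{\Z/p\Z}$ with a fixed generating set lifted from $F_3$, together with the observation that $F_3$ sits inside the relevant arithmetic group with the correct level so that the quotient graphs are genuinely the Ramanujan graphs in question; this is where one needs the existence of infinitely many primes with the required congruence/splitting properties, and is presumably the content attributed to Valette in the acknowledgements. Point (ii) is a routine but essential index computation via Nielsen--Schreier, ensuring the index function is injective on the chosen set of primes so that \cref{main} can separate the classes.
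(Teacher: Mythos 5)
Your overall strategy --- parametrise filtrations by primes, distinguish the resulting box spaces by computing Nielsen--Schreier ranks and applying \cref{main}, and use Dirichlet to produce infinitely many admissible primes --- is exactly the paper's. But the concrete construction you propose does not deliver \emph{Ramanujan} graphs, and that is precisely the non-routine part. Congruence quotients of a finite-index free subgroup of $\SL{2}{\Z}$ (the setup from the example before \cref{bounded}) are expanders by Selberg's $3/16$ theorem, but they are not known to be Ramanujan; the Ramanujan property in the Lubotzky--Phillips--Sarnak construction comes from a quaternion lattice, not from $\SL{2}{\Z}$. The paper instead invokes Theorem 7.4.3 of \cite{Lub}: for $p=5$ the relevant quaternion lattice contains a free group on $(p+1)/2=3$ generators --- this is why the theorem is stated for $F_3$ rather than $F_2$, a point your proposal never addresses --- and for an odd prime $q$ with $-1$ a square mod $q$ and $5$ a square mod $2q$, the quotients $F_3/N_i\cong\operatorname{PSL}_2(q^i)$ with the six LPS generators form a genuine Ramanujan family, with $N_i/N_{i+1}\cong\Z_q^3$. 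So the route through $\SL{2}{\Z}$ fails at the one step you yourself flag as the main obstacle.

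Two further issues. First, ``a box space containing at least one Ramanujan expander component'' is not what the statement requires: a single finite graph is not an expander, and one needs the whole infinite sequence of quotients in the box space to be a Ramanujan family with a uniform spectral bound ($2\sqrt{5}$ here); the paper's construction achieves this for every component. Second, your parenthetical alternative ``mod $p_i$ for an increasing sequence of primes'' does not define a filtration, since kernels of reductions modulo distinct primes are not nested; intersecting them produces quotients that are direct products rather than the Ramanujan graphs. Once the correct filtration is in place, your rank argument matches the paper's: $\rk{N_i}=(q^2-1)q^{3i-2}+1$, so distinct admissible $q$ give non-isomorphic subgroups and \cref{main} separates the classes, with Dirichlet (the paper takes $q\equiv 1\pmod{20}$) supplying infinitely many such $q$.
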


\begin{proof}
We begin by fixing an odd prime $q$ such that $-1$ is a quadratic residue modulo $q$, and $5$ is a quadratic residue modulo $2q$. 
By Theorem 7.4.3 of \cite{Lub}, there exists a filtration $(N_i)$ of $F_3$ with the property that the quotients $F_3/Ni \cong \operatorname{PSL}_2(q^i)$ form a Ramanujan expander sequence, and such that $N_i/N_{i+1} \cong \mathbb{Z}_q^3$ (see \cite{DK}). 
Note that $[F_3:N_1]=|\operatorname{PSL}_2(q)|=q(q^2-1)/2$.
Using this, and the Nielsen-Schreier rank formula, we obtain
$$\rk{N_i}= 2q^{3(i-1)}\cdot \frac{q(q^2-1)}{2}+1= (q^2-1)q^{3i-2}+1.$$
By taking such expanders corresponding to different primes $q$ satisfying the above conditions, consideration of the ranks mean that we obtain box spaces of $F_3$ which are not coarsely equivalent by \cref{main}.

To see that infinitely many such $q$ exist, we proceed as follows.
It is well-known that $-1$ is a square modulo $q$ if and only if $q$ is congruent to $1$ modulo $4$.
Now $5$ is a square modulo $2q$ if and only if $5$ is a square modulo $q$ by the Chinese remainder theorem, and this happens if and only if $q$ is a square modulo $5$ by quadratic reciprocity, which in turn happens if and only if $q$ is congruent to $1$ or $-1$ modulo $5$. 
If $q$ is congruent to $1$ modulo $20$ then $q$ is both congruent to $1$ modulo $4$ and congruent to $1$ modulo $5$, and by Dirichlet's Theorem, there are infinitely many such primes $q$.
\end{proof}
See also \cite{Hum}, where a continuum of regular equivalence classes of expanders with large girth (i.e. the length of the smallest loop tending to infinity) is constructed.

\begin{theorem}[\cref{coprimeInt} in Introduction]
Given $n \geq 3$, there exists a box space of the free group $F_n$ such that no box space of $F_m$ with $m-1$ coprime to $n-1$ is coarsely equivalent to it.
\end{theorem}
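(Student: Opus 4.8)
The plan is to exploit the Nielsen--Schreier rank formula together with \cref{main}, by choosing a filtration of $F_n$ whose associated ranks have a tightly controlled prime factorisation. Since $n\geq 3$, I would fix a prime $p$ dividing $n-1$ and take $(N_i)$ to be the mod-$p$ homology filtration of $F_n$, defined as in the proof of \cref{HilbInt} by $N_1:=F_n$ and $N_{i+1}:=N_i^p[N_i,N_i]$. This is a genuine filtration because $F_n$ is residually-$p$, so $\cap_i N_i$ is trivial, and since $N_j/N_{j+1}\cong \mathbb{Z}_p^{\rk{N_j}}$ each index $[F_n:N_i]$ is a power of $p$. By the Nielsen--Schreier formula, $\rk{N_i}-1=(n-1)[F_n:N_i]=(n-1)p^{e_i}$ for suitable exponents $e_i$; as $p\mid n-1$, this means that \emph{every} prime dividing $\rk{N_i}-1$ already divides $n-1$.

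Next I would assume, for contradiction, that $\Box_{(N_i)}F_n\simeq_{\text{CE}}\Box_{(M_j)}F_m$ for some filtration $(M_j)$ of $F_m$ with $\gcd(m-1,n-1)=1$. Finite-index subgroups of free groups are again free, so $N_i\cong M_j$ is equivalent to $\rk{N_i}=\rk{M_j}$. By \cref{main} there is an almost permutation $f$ with bounded displacement such that $\rk{N_i}=\rk{M_{f(i)}}$ for all sufficiently large $i$; applying the rank formula on both sides then gives the identity $(n-1)p^{e_i}=\rk{N_i}-1=(m-1)[F_m:M_{f(i)}]$.

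The key step is purely number-theoretic. This identity shows $(m-1)\mid (n-1)p^{e_i}$, while every prime factor of $(n-1)p^{e_i}$ divides $n-1$ and is therefore coprime to $m-1$ by hypothesis. Hence $\gcd(m-1,(n-1)p^{e_i})=1$, which together with $(m-1)\mid (n-1)p^{e_i}$ forces $m-1=1$. For $m\geq 3$ this contradicts $m-1\geq 2$, so no box space of such an $F_m$ can be coarsely equivalent to $\Box_{(N_i)}F_n$, giving the conclusion. The choice of filtration and this divisibility computation are the routine parts of the argument.

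The main obstacle is the boundary case $m=2$, where $m-1=1$ is coprime to $n-1$ but the argument degenerates: matching ranks imposes no constraint, since $\rk{M_j}-1=[F_2:M_j]$ can take any value and the sequence $((n-1)p^{e_i})_i$ is not obviously obstructed from being realised as an index sequence of a filtration of $F_2$. Because \cref{main} only yields rank information, I expect this case to require either treating $m\geq 3$ as the intended range or supplying an extra invariant beyond the rank that rules out a bounded-displacement matching with a filtration of $F_2$.
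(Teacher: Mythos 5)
Your argument is essentially the paper's: the paper takes the $(n-1)$-homology filtration, so that $\rk{N_i}-1=(n-1)^{a_i}$, and derives the same divisibility contradiction from \cref{main} and the Nielsen--Schreier formula; your choice of the mod-$p$ filtration for a prime $p\mid n-1$ is an immaterial variant of this. Your worry about the boundary case $m=2$ applies verbatim to the paper's own proof, which simply declares the identity ``impossible due to the assumptions on $m$'' (false when $m-1=1$), so the statement is implicitly being read with $m-1\geq 2$ and you are not missing any ingredient that the paper supplies.
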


\begin{proof}
Consider the $(n-1)$-homology filtration $(N_i)$ of $F_n$, recalling that this filtration is defined inductively by $N_1:=F_n$, $N_{i+1}:=N_i^{n-1}[N_i,N_i]$.
If $(M_i)$ is a filtration of $F_m$, then by \cref{main}, if $\Box_{(N_i)}F_n \simeq_{CE} \Box_{(M_i)}F_m$ then for some $i, j$, we must have $$(n-1)^a +1 = [F_m:M_j](m-1) +1$$ with $a\in \mathbb{N}$ (by rank considerations). But this is impossible due to the assumptions on $m$. 
\end{proof}

\subsection{Induced box spaces of subgroups}
Given a residually finite, finitely generated group $G$ with filtration $(M_i)$, and a subgroup $H<G$, one can construct a filtration of $H$ by taking the intersections $(M_i \cap H)$. A natural question is how the box spaces $\Box_{(M_i)}G$ and $\Box_{(M_i\cap H)}H$ are related. 
Note that the quotient $H/(M_i\cap H)$ in $\Box_{(M_i\cap H)}H$ is isomorphic to $HM_i/M_i$, which can be viewed as a subgroup of $G/M_i$. 

A situation of particular interest is when $H$ is a finite index subgroup of $G$, as this implies that $G$ and $H$ are quasi-isometric. We now show that this surprisingly does not necessarily mean that the box spaces $\Box_{(M_i)}G$ and $\Box_{(M_i\cap H)}H$ are coarsely equivalent, correcting Proposition 4 of \cite{Kh12}\footnote{The error in Proposition 4 of \cite{Kh12} arises from assuming that the metric induced on the $H/(M_i\cap H)\cong HM_i/M_i$ by considering them as subgroups of $G/M_i$ is coarsely equivalent to a metric coming from a fixed generating set of $H$, which is not the case. The result is true if this other induced metric is considered. The main results of the paper, Theorem 10 and Theorem 13, are unaffected.}.

\begin{theorem}[\cref{CorInt} in Introduction]
There exists a finitely generated, residually finite group $G$ with filtration $(M_i)$ and a normal finite index subgroup $H\lhd G$ such that $\Box_{(M_i)}G$ and $\Box_{(M_i\cap H)}H$ are not coarsely equivalent.
\end{theorem}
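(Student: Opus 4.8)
The plan is to produce an explicit counterexample and let \cref{main} do the work. I would take $G=F_2$, which is finitely presented and residually finite, and let $H\lhd F_2$ be the index-two (hence normal) subgroup given as the kernel of a surjection $\pi\colon F_2\to \Z/2\Z$; by Nielsen--Schreier, $H$ is free with $\rk{H}=3$. For the filtration $(M_i)$ I would fix an odd prime $p$ and use the $p$-homology filtration $M_1=F_2$, $M_{i+1}=M_i^p[M_i,M_i]$, exactly as in the proof of \cref{HilbInt}. Since free groups are residually $p$, this is a genuine filtration and each quotient $F_2/M_i$ is a finite $p$-group, so $[F_2:M_i]$ is a power of the odd prime $p$. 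One then checks routinely that $(M_i\cap H)$ is a filtration of $H$: it is nested, each $M_i\cap H$ is normal and of finite index in $H$, and $\bigcap_i(M_i\cap H)=\left(\bigcap_i M_i\right)\cap H=\{e\}$.

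Next I would argue by contradiction, assuming $\Box_{(M_i)}G\simeq_{\text{CE}}\Box_{(M_i\cap H)}H$. Because $G=F_2$ is finitely presented and $H$ is finitely generated, \cref{main} supplies an almost permutation $f$ of $\N$ with bounded displacement such that $M_i\cong M_{f(i)}\cap H$ for every $i$ in the (cofinite) domain of $f$. All the groups involved are subgroups of $F_2$ and hence free, so this isomorphism forces an equality of Nielsen--Schreier ranks, which I would convert into an equality of indices.

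The decisive step is then a parity obstruction. Since $[F_2:M_j]$ is a power of the odd prime $p$ while $[F_2:H]=2$, no $M_j$ can be contained in $H$ (containment would force $2\mid[F_2:M_j]$), so $HM_j=F_2$ and therefore $[H:M_j\cap H]=[HM_j:M_j]=[F_2:M_j]$ for every $j$. The Nielsen--Schreier formula gives $\rk{M_i}=[F_2:M_i]+1$ and, using $\rk{H}=3$, also $\rk{M_{f(i)}\cap H}=2[F_2:M_{f(i)}]+1$. Hence $M_i\cong M_{f(i)}\cap H$ yields $[F_2:M_i]=2[F_2:M_{f(i)}]$, whose left-hand side is odd (a power of $p$) and whose right-hand side is even; this is the contradiction, so the two box spaces are not coarsely equivalent. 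I do not expect any real obstacle here: once the explicit example is set up, \cref{main} reduces everything to an elementary rank count, and the only points needing care, namely that $(M_i\cap H)$ is a legitimate filtration and that $M_j\not\subseteq H$, both follow immediately from the oddness of the indices $[F_2:M_j]$.
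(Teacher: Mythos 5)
Your proposal is correct, and the overall strategy is the same as the paper's: exhibit an explicit pair $(G,H,(M_i))$, invoke \cref{main} to get an isomorphism $M_i\cong M_{f(i)}\cap H$ for some $i$, and refute it by a Nielsen--Schreier rank count. The difference lies entirely in the choice of example, and yours is genuinely simpler. The paper takes $G=F_3$ with the filtration $(N_i)$ coming from the $\operatorname{PSL}_2(29^i)$ quotients of \cite{DK}, takes $H=\Gamma(N_3)=N_3^q[N_3,N_3]$ (a $q$-homology cover), and derives a contradiction from the fact that $(q^2-1)q^7-2$ is not divisible by $3$; this requires importing the lattice of subgroups and Proposition 3.9 from \cite{DK}. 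You instead take $G=F_2$, $H$ an index-$2$ normal subgroup, and the $p$-homology filtration for an odd prime $p$, so that $[F_2:M_i]$ is an odd prime power while $\rk{M_{f(i)}\cap H}-1=2[F_2:M_{f(i)}]$ is even --- a pure parity obstruction. Your computation is sound: since $[F_2:M_j]$ is odd, $M_j\not\subseteq H$, hence $HM_j=F_2$ and $[H:M_j\cap H]=[F_2:M_j]$, and the two Nielsen--Schreier formulas ($\rk{K}=[F_2:K]+1$ inside $F_2$, $\rk{K}=2[H:K]+1$ inside the rank-$3$ group $H$) give $p^{a}=2p^{b}$, which is impossible. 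The only points needing care --- that the $p$-homology series is a genuine filtration (cofinality among finite $p$-quotients of the free group) and that $(M_i\cap H)$ inherits the filtration properties --- are handled correctly and are consistent with the paper's own use of $m$-homology filtrations in the proof of \cref{HilbInt}. Your example buys self-containedness and a cleaner arithmetic obstruction; the paper's example is embedded in its broader study of the \cite{DK} lattice, which is presumably why it was chosen there.
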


\begin{proof}
We will take $G$ to be the free group $F_3$. For convenience, we will use a lattice of normal subgroups of $F_3$ constructed in \cite{DK}; we briefly recall the relevant details and refer the reader to \cite{DK} for the construction.

We first fix the odd prime $q=29$, noting that $-1$ is a quadratic residue modulo $q$, and $5$ is a quadratic residue modulo $2q$. 
There exists a filtration $(N_i)$ of $F_3$ such that $[F_3:N_i]= q^{3i-2}(q^2-1)/2$ (see \cite{DK}) and so by the Nielsen-Schreier formula, $\rk{N_i}= (q^2-1)q^{3i-2}+1$. 

For the group $H$, we will take the subgroup $\Gamma(N_3)$ of $F_3$. This is the subgroup corresponding to the $q$-homology cover of the finite quotient $F_3/N_3$, defined by $\Gamma(N_3):= N_3^q [N_3,N_3]$ (i.e. the subgroup generated by all commutators and all $q$th powers of elements of $N_3$). We thus have that $[N_3:\Gamma(N_3)]=q^{\rk{N_3}}=q^{(q^2-1)q^7+1}$. Note that $\Gamma(N_3)$ is of finite index in $F_3$.

The filtration of $G=F_3$ we will consider is the filtration $(N_i)$, starting at $i=4$. So, the box spaces in question are $\Box_{(N_i)}F_3$ and $\Box_{(N_i\cap \Gamma(N_3))} \Gamma(N_3)$. We have that $\Gamma(N_3)/(N_i\cap\Gamma(N_3))$ is isomorphic to $N_i \Gamma(N_3) / N_i$, which in turn is isomorphic to $N_4/N_i$ by Proposition 3.9 of \cite{DK}.

We can now compute the rank of the subgroups $N_i\cap \Gamma(N_3)$ using the Nielsen-Schreier formula:
\begin{align*}
\rk{N_i\cap \Gamma(N_3)}&= 2[F_3:N_i\cap \Gamma(N_3)]+1\\
&= 2[F_3:N_3]\cdot [N_3:\Gamma(N_3)] \cdot [\Gamma(N_3):N_i\cap \Gamma(N_3)]+1\\
&= q^{7}(q^2-1) \cdot q^{(q^2-1)q^7+1} \cdot [N_4:N_i]+1\\
&= (q^2-1)q^{(q^2-1)q^7+8} \cdot q^{3(i-4)} +1\\
&= (q^2-1)q^{(q^2-1)q^7-4+3i}+1.
\end{align*}
Now if $\Box_{(N_i)}F_3$ and $\Box_{(N_i\cap \Gamma(N_3))} \Gamma(N_3)$ were coarsely equivalent, then by \cref{main} for some $i,j\in \mathbb{N}$, comparing ranks of $N_i\cap \Gamma(N_3)$ and $N_j$, we must have that $(q^2-1)q^{(q^2-1)q^7-4+3i}+1= (q^2-1)q^{3j-2}+1$. 
This implies that $(q^2-1)q^7-4+3i=3j-2$, which means that $(q^2-1)q^7-2=(29^2-1)\cdot 29^7-2$ must be divisible by $3$. Since this is not the case, we conclude that $\Box_{(N_i)}F_3$ and $\Box_{(N_i\cap \Gamma(N_3))} \Gamma(N_3)$ are not coarsely equivalent.
\end{proof}

%\bibliographystyle{alpha}
%\bibliography{bib}

\end{document}